\newtheorem{lemma}{Lemma}
\newtheorem{theorem}{Theorem}
\newtheorem{corollary}{Corollary}
\newcommand{\eq}[1]{\begin{equation}\label{#1}}
\newcommand{\en}{\end{equation}}
\newenvironment{proof}{\begin{trivlist}
                       \item[]{\bf Proof.}
                       \hspace{0cm} }{\hfill $\Box$
                       \end{trivlist}}
\def\IC{\mathbb{C}}
\def\calY{\mathcal{Y}} 
\def\calV{\mathcal{V}} 
\def\calU{\mathcal{U}} 
\def\calX{\mathcal{X}} 
\def\calK{\mathcal{K}} 
\def\calQ{\mathcal{Q}} 
\def\calX{\mathcal{X}} 
\def\calW{\mathcal{W}} 
\def\inv{^{-1}}
\begin{document}

\begin{frontmatter}

\title{A generalization of Saad's bound on harmonic Ritz vectors of Hermitian matrices
}

%
%
%
%

\author{Eugene Vecharynski\corref{mycorrespondingauthor}}
\cortext[mycorrespondingauthor]{Corresponding author}
\ead{eugene.vecharynski@gmail.com}
\ead[url]{http://evecharynski.com/}
\address{Computational Research Division, Lawrence Berkeley National Laboratory; 1 Cyclotron Road, Berkeley, CA 94720, USA}

%
%

\begin{abstract}
We prove a Saad's type bound for harmonic Ritz vectors of a Hermitian 
matrix. The new bound reveals a dependence of 
the harmonic Rayleigh--Ritz procedure on the condition number 
of a shifted problem operator. Several practical implications
are discussed. In particular, the bound motivates 
incorporation of
preconditioning into
the harmonic Rayleigh--Ritz scheme.  
\end{abstract}

\begin{keyword}
interior eigenvalue \sep eigenvector \sep harmonic Rayleigh--Ritz  \sep Ritz vector
\sep condition number \sep preconditioning \sep eigensolver \sep a priori bound
\MSC[2010] 65F15 
\end{keyword}

\end{frontmatter}


\section{Introduction}\label{sec:intro}

The Rayleigh--Ritz procedure is a well known technique for approximating
eigenpairs $(\lambda,x)$ of an $n$-by-$n$ matrix $A$ over a given subspace~$\calK$~\cite{Parlett:98, Saad-book3, Stewart-eigen-book}. 
It produces~approximate eigenpairs $(\mu, u)$, called the Ritz pairs, that satisfy the Galerkin condition 
\[
A u - \mu u \perp \calK, \quad u \in \calK.
\]
This is done by solving an $s$-by-$s$ eigenvalue problem
\eq{eq:rr}
K^* A K c = \mu K^*K c,
\en
where $K$ is a matrix whose columns contain a basis of 
$\calK$ and $s = \dim(\calK)<<n$. 
The eigenvalues $\mu$ of the projected problem~\eqref{eq:rr}, called the Ritz values,
represent approximations to the eigenvalues $\lambda$ of $A$. The 
associated eigenvectors $x$ are approximated by the Ritz vectors $u = Kc$. 

For a Hermitian matrix $A$, a general a priori bound 
that describes the approximation quality of Ritz vectors 
is due to Saad~\cite[Theorem~4.6]{Saad-book3}. The bound shows that 
the proximity of a Ritz vector $u$ to an exact eigenvector $x$ 
is determined essentially by the angle    
between this eigenvector and the subspace~$\calK$, defined as  
\eq{eq:sin_def}
\angle (x, \calK) = \min_{y \in \calK, y\neq0} \angle (x,y).
\en
This result (in a slightly generalized form) is stated in Theorem~\ref{thm:saad}.

\begin{theorem}[Saad~\cite{Saad-book3}]\label{thm:saad}
Let $(\lambda,x)$ be an eigenpair of a Hermitian matrix $A$ and $(\mu, u)$ be a Ritz pair with respect to the
subspace~$\calK$. 
Assume that~$\Theta$ is a set of all the Ritz values and 
let $P_{\calK}$ be an orthogonal projector onto $\calK$. Then    
\eq{eq:saad}
\sin \angle (x, u) \leq \sqrt{1 + \frac{\gamma^2}{\delta^2}} \sin \angle(x, \cal K),
\en
where
$\gamma = \|P_{\calK} A (I - P_{\calK} ) \|$
and $\delta$ is the distance between $\lambda$ and the Ritz value other that $\mu$, i.e.,
\eq{eq:delta0}
\delta = \displaystyle \min_{\mu_j \in \Theta \setminus \mu}|\lambda - \mu_j|.
\en
\end{theorem}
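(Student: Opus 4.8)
The plan is to work in an orthonormal basis of Ritz vectors and reduce the whole statement to a Pythagorean bookkeeping of expansion coefficients. First I would normalize $x$ and $u$. Since $A$ is Hermitian, the projected operator $P_{\calK} A$ restricted to $\calK$ is Hermitian, so its eigenvectors---the Ritz vectors $u_1,\dots,u_s$---may be taken orthonormal, with $u = u_k$ for some index $k$ and associated Ritz value $\mu = \mu_k$. Writing $x = P_{\calK} x + (I - P_{\calK}) x$ and expanding the in-subspace part as $P_{\calK} x = \sum_j \xi_j u_j$, I would record the identity $\sin\angle(x,\calK) = \|(I-P_{\calK})x\|$ and, since $\langle x, u_k\rangle = \xi_k$, the identity $\sin^2\angle(x,u) = 1 - |\xi_k|^2$. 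Combining these with $\|x\|^2 = 1$ yields the decomposition $\sin^2\angle(x,u) = \sum_{j\neq k}|\xi_j|^2 + \sin^2\angle(x,\calK)$, so the theorem reduces to bounding the off-diagonal mass $\sum_{j\neq k}|\xi_j|^2$.

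The central step is to extract information from the eigenrelation $Ax = \lambda x$. Projecting it onto $\calK$ via $P_{\calK}$ and splitting $x$ through the projector, I would obtain $P_{\calK} A P_{\calK} x - \lambda P_{\calK} x = -P_{\calK} A (I - P_{\calK}) x$. Because $P_{\calK}x \in \calK$, its image under the projected operator expands as $P_{\calK} A P_{\calK} x = \sum_j \mu_j \xi_j u_j$, so the left-hand side equals $\sum_j (\mu_j - \lambda)\xi_j u_j$. Taking the inner product with each $u_j$ then gives $(\mu_j - \lambda)\xi_j = -\langle P_{\calK} A(I-P_{\calK})x,\, u_j\rangle$. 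For $j \neq k$ the factor $|\mu_j - \lambda|$ is bounded below by $\delta$, which lets me divide and sum; Bessel's identity over the orthonormal basis collapses the sum of squared inner products into $\|P_{\calK} A (I-P_{\calK})x\|^2$. Bounding this by $\gamma\|(I-P_{\calK})x\| = \gamma\sin\angle(x,\calK)$ yields $\sum_{j\neq k}|\xi_j|^2 \leq (\gamma^2/\delta^2)\sin^2\angle(x,\calK)$, and substituting back into the decomposition and taking square roots finishes the proof.

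The main obstacle, and the place where Hermiticity is essential, is the existence of an orthonormal Ritz basis together with the clean orthogonal splitting it produces: without it one cannot write $\sin^2\angle(x,u)$ as a sum of squared coefficients, nor invoke Bessel's identity to pass from the individual coefficients $\xi_j$ to the single computable quantity $\gamma$. A secondary subtlety is the separation of the index $k$ from the rest of the spectrum---the bound is vacuous unless $\lambda$ is isolated from the other Ritz values, which is exactly what the gap $\delta$ encodes; care is needed so that $|\mu_j - \lambda| \geq \delta$ is applied only to the terms $j \neq k$, ensuring the potentially tiny gap between $\lambda$ and $\mu_k$ never enters the estimate. Relative to Saad's original statement the generalization lies in allowing $\mu$ to be an arbitrary Ritz value and in the precise form $\gamma = \|P_{\calK} A(I-P_{\calK})\|$, but the argument above accommodates both without modification.
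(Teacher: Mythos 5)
The paper itself offers no proof of Theorem~\ref{thm:saad}: it is quoted, with attribution, from Saad's book, so there is no internal proof to compare yours against. Judged on its own, your argument is correct and complete. The orthonormal Ritz basis $\{u_j\}$ of $\calK$ exists because $P_{\calK}AP_{\calK}$ restricted to $\calK$ is Hermitian; the identity $\sin^2\angle(x,u)=\sum_{j\neq k}|\xi_j|^2+\sin^2\angle(x,\calK)$ is right; projecting $Ax=\lambda x$ onto $\calK$ does give $(\mu_j-\lambda)\xi_j=-\langle P_{\calK}A(I-P_{\calK})x,\,u_j\rangle$, since the Galerkin condition means $P_{\calK}Au_j=\mu_j u_j$; and the chain
\begin{equation*}
\sum_{j\neq k}|\xi_j|^2\;\leq\;\frac{1}{\delta^2}\,\|P_{\calK}A(I-P_{\calK})x\|^2\;\leq\;\frac{\gamma^2}{\delta^2}\,\sin^2\angle(x,\calK)
\end{equation*}
uses Bessel's inequality and the idempotence of $I-P_{\calK}$ correctly. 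This is, however, not a genuinely new route: it is the classical argument, with Saad's projector formulation written out in coordinates (Saad inverts $P_{\calK}AP_{\calK}-\lambda I$ on the orthogonal complement of $u$ inside $\calK$, where its spectrum is $\{\mu_j\}_{j\neq k}$; your basis expansion plus Bessel is exactly that inversion). One caveat worth recording: your step $|\mu_j-\lambda|\geq\delta$ for all $j\neq k$ requires $\Theta\setminus\mu$ to be understood as removing only the single copy of $\mu$ attached to $u$; if $\mu$ is a repeated Ritz value, $\delta$ must be read as in the paper's later remark about repetition of multiple (harmonic) Ritz values, since under a strict set-theoretic reading the statement itself --- not your proof --- would fail.
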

Throughout, 
$\|\cdot\|$ denotes either the spectral or the Frobenius norm of a matrix; or a~vector's 2-norm, 
depending on the context. The matrix Frobenius norm will be denoted by $\|\cdot\|_F$.

%
Bound~\eqref{eq:saad} is often referred to as ``Saad's bound'' in literature, 
e.g.,~\cite{Chen.Jia:04,Stewart:01}. It was later extended by Stewart~\cite{Stewart:01} to invariant
subspaces of general matrices. 
\vspace{-.5cm}
\begin{theorem}[Stewart~\cite{Stewart:01}]\label{thm:stewart}
Let $\calX$ be an invariant subspace of a (possibly non-Hermitian) matrix $A$.
Let $\calU$ be a Ritz subspace\footnote{Let $(M,U)$ be a matrix pair, such that
all columns of $U$ are in $\calK$ and $AU - UM \perp \calK$. Then the Ritz subspace 
$\calU \subseteq \calK$ is defined as a column space of $U$. If $A$ is Hermitian, then $\calU$ is 
a subspace spanned by Ritz vectors.} 
and $\calV$ its orthogonal~complement in $\calK$.  
Then
\eq{eq:stewart}
\sin \angle (\calX, \calU) \leq \sqrt{1 + \frac{\gamma^2}{\delta^2}} \sin \angle(\calX, \calK),
\en
with $\gamma = \|P_{\calK} A (I - P_{\calK} ) \|$
and $\delta$ defined by  
\eq{eq:delta0_sep}
\delta = \displaystyle \inf_{\|Z\| = 1}\|(V^* A V) Z  - Z (X^* A X)\|,
\en
where $X$ and $V$ are arbitrary orthonormal bases of $\calX$ and $\calV$, respectively.
\end{theorem}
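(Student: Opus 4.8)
The plan is to reduce the two-sided subspace inequality to a single bound on the ``leakage'' of $\calX$ into $\calV$, which is then controlled through a Sylvester equation. I would fix orthonormal bases $X$, $U$, $V$ of $\calX$, $\calU$, $\calV$, so that $[U,\,V]$ is an orthonormal basis of $\calK$ and $P_{\calK} = UU^* + VV^*$. The two structural facts I would exploit are that invariance of $\calX$ gives $AX = X(X^*AX)$, and that the Ritz--Galerkin condition $AU - U(U^*AU)\perp\calK$ forces the orthogonality $V^*AU = 0$. Decomposing $X = UE + VF + G$ with $E = U^*X$, $F = V^*X$, and $G = (I - P_{\calK})X$ sets the stage.

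First, a geometric step. Since the columns of $VF$ lie in $\calV\subseteq\calK$ and the columns of $G$ lie in $\calK^{\perp}$, the blocks $VF$ and $G$ are mutually orthogonal, so $(I-P_{\calU})X = VF + G$ gives $((I-P_{\calU})X)^*(I-P_{\calU})X = F^*F + G^*G$. Using $\sin\angle(\calX,\calU) = \|(I-P_{\calU})X\|$ together with the additivity of the Frobenius norm (and Weyl's inequality for the spectral norm), this yields $\sin^2\angle(\calX,\calU) \le \|F\|^2 + \|G\|^2$, where $\|G\| = \sin\angle(\calX,\calK)$. Everything then reduces to estimating $\|F\| = \|V^*X\|$.

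Next, I would derive the governing Sylvester equation. Multiplying $AX = X(X^*AX)$ on the left by $V^*$ and substituting $X = UE + VF + G$, the term $V^*AU\,E$ vanishes by the Galerkin condition, leaving $(V^*AV)F - F(X^*AX) = -V^*AG$. Because $V$ has columns in $\calK$ and $G$ has columns in $\calK^{\perp}$, the right-hand side equals $V^*P_{\calK}A(I-P_{\calK})G$, so $\|V^*AG\|\le\gamma\|G\|$. The left-hand side is precisely the action of the operator $Z\mapsto (V^*AV)Z - Z(X^*AX)$ whose smallest singular value defines $\delta$ in \eqref{eq:delta0_sep}; hence $\delta\|F\| \le \|(V^*AV)F - F(X^*AX)\| = \|V^*AG\| \le \gamma\|G\|$, i.e.\ $\|F\| \le (\gamma/\delta)\|G\|$. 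Substituting into the geometric bound gives $\sin^2\angle(\calX,\calU) \le (1 + \gamma^2/\delta^2)\|G\|^2$, and taking square roots yields \eqref{eq:stewart}.

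The delicate point is not the algebra but the two identifications that make it close: recognizing that the Galerkin condition is exactly what kills the $UE$ component and produces a clean Sylvester equation, and observing that the separation of that equation coincides with the prescribed $\delta$. A secondary technical obstacle is ensuring that each inequality---especially $\sin^2\angle(\calX,\calU)\le\|F\|^2+\|G\|^2$ and $\|V^*AG\|\le\gamma\|G\|$---holds simultaneously for the spectral and the Frobenius norms, which is why I would phrase the norm estimates through unitary invariance and trace/interlacing arguments rather than norm-specific manipulations.
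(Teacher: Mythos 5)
The paper contains no proof of Theorem~\ref{thm:stewart}: it is imported verbatim from Stewart~\cite{Stewart:01}, so your attempt can only be measured against Stewart's original argument --- and, in substance, it \emph{is} that argument. Every step checks out. Invariance of $\calX$ indeed gives $AX = X(X^*AX)$; the Galerkin condition with orthonormal $U$ forces $M = U^*AU$ and hence $V^*AU = 0$; left-multiplying $AX = X(X^*AX)$ by $V^*$ and inserting $X = UE + VF + G$ then yields the Sylvester relation $(V^*AV)F - F(X^*AX) = -V^*AG$; homogeneity of the norm gives $\delta\|F\| \leq \|V^*AG\|$, and $V^*A(I-P_{\calK}) = V^*P_{\calK}A(I-P_{\calK})$ gives $\|V^*AG\| \leq \gamma\|G\|$, so $\|F\| \leq (\gamma/\delta)\|G\|$; combined with $\|(I-P_{\calU})X\|^2 \leq \|F\|^2 + \|G\|^2$ this closes the bound. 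Nothing in the chain uses Hermitian structure, so the non-Hermitian case is covered, and your care about running the estimates in both the spectral and Frobenius norms is warranted, since the paper later (Corollary~\ref{cor:stewart}) invokes the theorem specifically in the Frobenius norm.

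One caveat you should state explicitly: your identifications $\sin\angle(\calX,\calU) = \|(I-P_{\calU})X\|$ and $\sin\angle(\calX,\calK) = \|(I-P_{\calK})X\| = \|G\|$ are the $\sin\Theta$ (largest canonical angle, or Frobenius aggregate) definitions used by Stewart, \emph{not} the definition~\eqref{eq:subsp_angle} that this paper attaches to~\eqref{eq:stewart}, which is the \emph{smallest} angle between the subspaces. The distinction is not cosmetic: read literally with~\eqref{eq:subsp_angle}, the theorem is false. Take $A = \mathrm{diag}(0,10,20,30)$, $\calX = \Span\{e_1,e_2\}$, $x = (e_1+e_2)/\sqrt{2}$, $\calK = \Span\{x, e_3\}$, and the Ritz subspace $\calU = \Span\{e_3\}$, so that $\calV = \Span\{x\}$; then $\gamma = 5$, $\delta = 5$, yet $\sin\angle(\calX,\calK) = 0$ under~\eqref{eq:subsp_angle} (since $x \in \calX \cap \calK$) while $\sin\angle(\calX,\calU) = 1$. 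Under the $\sin\Theta$ reading the right-hand side is $\sqrt{2}\cdot 1$ and the bound holds. So your proof establishes the correct ($\sin\Theta$) version of Stewart's theorem --- which also dominates the minimal angle on the left-hand side --- and the mismatch is a defect of the paper's rendition rather than of your argument; but as written, your appeal to ``$\|G\| = \sin\angle(\calX,\calK)$'' is not valid under the paper's stated definition, so say which definition you are proving.
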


The angle between two subspaces in~\eqref{eq:stewart} is defined as
\eq{eq:subsp_angle}
\angle (\calX, \calK) = \min_{x \in \calX, x\neq0 \atop y \in \calK, y\neq0} \angle (x,y).
\en
Note that if $A$ is Hermitian and $\calX$, $\calU$ are
one-dimensional subspaces spanned by an eigenvector $x$ and a Ritz vector $u$, respectively,
then 
the values of $\delta$ in~\eqref{eq:delta0} and~\eqref{eq:delta0_sep} coincide;
and Theorem~\ref{thm:stewart} reduces to Theorem~\ref{thm:saad}. 

The Ritz pairs $(\mu,u)$
are known to be best 
suited for approximating the extreme eigenpairs of $A$, i.e., those $(\lambda,x)$ that correspond to $\lambda$ near the boundary of $A$'s spectrum, further denoted by $\Lambda(A)$.
If interior eigenpairs are wanted, 
then the Rayleigh--Ritz procedure may not be appropriate; 
it can produce ``spurious'' or ``ghost'' Ritz values~\cite{Morgan:91, Scott:82, Stewart-eigen-book}.

This problem, however, can be fixed by the use of the \textit{harmonic} Rayleigh--Ritz 
procedure~\cite{Morgan:91, Morgan.Zeng:98, Stewart-eigen-book}. 
Given a shift $\sigma$ pointing to a location inside $\Lambda(A)$,
the harmonic Rayleigh--Ritz scheme aims at finding the harmonic Ritz pairs 
$(\theta,v)$
that approximate the eigenpairs $(\lambda,x)$ of~$A$ 
associated with the eigenvalues $\lambda$ closest to $\sigma$. This is fulfilled by imposing
the Petrov--Galerkin condition
\eq{eq:galerkin}
A v - \theta v \perp (A - \sigma I) \calK, \quad v \in \calK,
\en
which, similar to~\eqref{eq:rr}, gives an $s$-by-$s$ eigenvalue problem.
In particular, if $A$ is Hermitian, this eigenvalue problem is of the form
\eq{eq:hrr}
K^* (A - \sigma I)^2 K c = \xi K^* (A - \sigma I) K c.
\en
The eigenpairs $(\xi,c)$ of~\eqref{eq:hrr} yield the harmonic Ritz pairs $(\theta, v)$,
where $\theta = \xi + \sigma$ is a harmonic Ritz value and 
$v = Kc$ is the corresponding harmonic Ritz~vector.

In this paper, we present a Saad's type bound for harmonic Ritz vectors~of a Hermitian 
matrix $A$.  
It shows that, along with $\angle(x,\calK)$, 
the closeness of~the harmonic Ritz vectors to the exact eigenvectors
generally depends on 
the spectral condition number of $A - \sigma I$. 
This property of the harmonic Rayleigh--Ritz 
procedure is fundamentally different from the standard
Rayleigh--Ritz which,~according to Theorem~\ref{thm:saad},
is not affected by conditioning of the (shifted)
operator.

Our finding has a practical implication.
Namely, difficulties related to a~poor conditioning of
algebraic systems are commonly mitigated by the use of preconditioners; e.g.,~\cite{Greenbaum:97, Saad:03}. 
Therefore, motivated by the dependence on the condition number, 
one can expect to improve the harmonic Rayleigh--Ritz approximations 
by properly preconditioning the procedure.


A possible way to blend preconditioning directly into the harmonic Rayleigh--Ritz
scheme was proposed in~\cite{Ve.Kn:14TR}.
There, 
the authors 
introduce the \textit{$T$-harmonic} Rayleigh--Ritz procedure, which is
defined by the Petrov--Galerkin condition~\eqref{eq:galerkin} with respect to the inner product 
$(\;\cdot\;,\;\cdot\;)_T = (\;\cdot\;, T \; \cdot)$, where $T$ is a Hermitian positive definite (HPD) 
preconditioner. Within this framework, the eigenpairs
of $A$ are approximated by the $T$-harmonic Ritz pairs $(\theta,v)$, such that  
\eq{eq:galerkinT}
A v - \theta v \perp_T (A - \sigma I) \calK, \quad v \in \calK,
\en
where $\perp_T$ denotes orthogonality in the $T$-inner product. If $A$ is Hermitian, 
the procedure amounts to solving an $s$-by-$s$ eigenvalue problem 
\eq{eq:hrrT}
K^* (A - \sigma I) T (A - \sigma I) K c = \xi K^* (A - \sigma I) T K c.
\en
The $T$-harmonic Ritz values are then given by $\theta = \xi + \sigma$, whereas the $T$-harmonic
Ritz vectors are defined by $v = K c$. 

In the present work, we address an idealized situation where the HPD preconditioner $T$
commutes with $A$. In this case, our generalization of the Saad's bound can further 
be easily extended to the $T$-harmonic Rayleigh--Ritz. 
We show that, along with $\angle(x,\calK)$, 
the proximity of the $T$-harmonic Ritz vectors to the exact eigenvectors 
depends on the condition number of the matrix $T^{1/2}(A - \sigma I)$. 
In particular, this means that the approximation quality can be improved in practice by
properly choosing a preconditioner $T$. 
We briefly discuss several possibilities for defining $T$, including the absolute value 
preconditioning~\cite{thesis, Ve.Kn:13}. 

Finally, we note that other generalizations of the Saad's bound on the harmonic Ritz 
vectors were obtained in~\cite{Chen.Jia:04, Jia:04}.
These results, 
however, aim at~general matrices $A$
and as a consequence do not capture certain peculiarities of the Hermitian case.
In particular, the bounds in~\cite{Chen.Jia:04, Jia:04} do not reveal the dependence
on the condition number. Furthermore, they fail to imply that the 
harmonic Ritz vectors necessarily converge to the exact eigenvectors as $\angle(x,\calK)$ 
decreases.

The paper is organized as following. Section~\ref{sec:harm} presents our main result. 
Related work, such as~\cite{Chen.Jia:04, Jia:04}, is discussed in Section~\ref{sec:relres}.  
Section~\ref{sec:mult} provides an extension of the main theorem on eigenspaces associated with multiple 
eigenvalues.
In Section~\ref{sec:tharm},
we consider the case of the $T$-harmonic Rayleigh--Ritz and derive the Saad's bound 
for commuting $A$ and $T$.
Throughout, we assume that $A$ is Hermitian. 

\section{A bound for harmonic Ritz vectors}\label{sec:harm}

We start with a lemma that provides a two-sided bound on the angle between an 
eigenvector $x$ and an arbitrary vector $y$ in terms of $\angle(x,Ay)$.
This bound will be crucial for deriving our main result. 

\begin{lemma}\label{lem:sin}
Let $(\lambda,x)$ be an eigenpair of a nonsingular Hermitian matrix $A$. 
Then for any vector~$y$, we have
\eq{eq:sin}
| \lambda/\lambda_{\max} | \sin\angle(x,Ay) \leq \sin\angle(x,y) \leq 
| \lambda/\lambda_{\min} | \sin\angle(x,Ay), 
\en
where $\lambda_{\min}$ and $\lambda_{\max}$ are the smallest and largest magnitude eigenvalues
of~$A$, respectively.
\end{lemma}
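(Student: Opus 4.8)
The plan is to reduce the whole statement to the orthogonal decomposition of $y$ along $x$ and its complement, and then to an elementary scalar inequality. First I would assume without loss of generality that $\|x\| = 1$ and write $P = xx^*$ for the orthogonal projector onto $\Span\{x\}$. Since $A$ is Hermitian and $Ax = \lambda x$ with $\lambda$ real, $A$ and $P$ commute ($AP = PA = \lambda P$), so the orthogonal complement $x^\perp$ is $A$-invariant. Decomposing $y = \alpha x + w$ with $\alpha = x^*y$ and $w = (I-P)y \perp x$, we get $Ay = \alpha \lambda x + Aw$ with $Aw \perp x$, and orthogonality of the summands yields, by Pythagoras, $\|y\|^2 = |\alpha|^2 + \|w\|^2$ and $\|Ay\|^2 = |\lambda|^2 |\alpha|^2 + \|Aw\|^2$.

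Next I would express the two sines through these quantities. Using $\sin\angle(x,y) = \|(I-P)y\|/\|y\|$ and likewise $\sin\angle(x,Ay) = \|(I-P)Ay\|/\|Ay\| = \|Aw\|/\|Ay\|$ (where $(I-P)Ay = A(I-P)y = Aw$ by commutativity), and setting $a = |\alpha|$, $b = \|w\|$, $c = \|Aw\|$, I obtain closed forms for both sines and hence for the squared ratio
\[
R^2 = \left(\frac{\sin\angle(x,y)}{\sin\angle(x,Ay)}\right)^2 = \frac{b^2\bigl(|\lambda|^2 a^2 + c^2\bigr)}{c^2\bigl(a^2 + b^2\bigr)}.
\]

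The two facts that drive the bound are: (i) because $A$ is Hermitian its singular values are the magnitudes of its eigenvalues, so $|\lambda_{\min}|\,b \le c \le |\lambda_{\max}|\,b$; and (ii) $|\lambda_{\min}| \le |\lambda| \le |\lambda_{\max}|$, since $\lambda$ is itself an eigenvalue. I would then observe that $R^2$ is monotonically decreasing in $c$, by writing it as $\frac{|\lambda|^2 a^2 b^2}{c^2(a^2+b^2)} + \frac{b^2}{a^2+b^2}$, so its extreme values over the admissible range of $c$ occur at the endpoints $c = |\lambda_{\min}|\,b$ and $c = |\lambda_{\max}|\,b$. Substituting these endpoints and simplifying reduces the two desired inequalities to $|\lambda_{\min}|^2 b^2 \le |\lambda|^2 b^2$ and $|\lambda_{\max}|^2 b^2 \ge |\lambda|^2 b^2$, which are exactly fact (ii); taking square roots gives the stated two-sided bound.

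The only places needing care are the degenerate cases, which I would dispose of up front: if $w = 0$ then $y$ is parallel to $x$, both sines vanish, and the inequality holds trivially, while nonsingularity of $A$ guarantees $|\lambda_{\min}| > 0$ so that $\sin\angle(x,Ay)$ and the denominators are well defined. The main (though modest) obstacle is the passage from the crude ratio bound to sines: one must verify that the cross terms in $\|Ay\|^2$ combine so that the endpoint substitutions collapse precisely to fact (ii) rather than to a weaker condition-number estimate. This is exactly what produces the clean factors $|\lambda/\lambda_{\max}|$ and $|\lambda/\lambda_{\min}|$ instead of the coarser $|\lambda_{\max}/\lambda_{\min}|$ one would get by bounding $\|w\|/\|Aw\|$ and $\|Ay\|/\|y\|$ separately.
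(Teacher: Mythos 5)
Your proof is correct, and while it starts from the same place as the paper --- the orthogonal decomposition $y = \alpha x + w$ and the resulting closed form for the squared sine ratio (your $R^2$ is identical to the paper's intermediate identity $\sin^2\phi/\sin^2\phi_A = (\lambda^2/(w^*A^2w))\cos^2\phi + \sin^2\phi$ after normalization) --- the final bounding step is genuinely different and, in fact, cleaner. The paper bounds $\lambda^2/(w^*A^2w)$ by Courant--Fischer restricted to the complement $x^{\perp}$, so its intermediate constants involve $\max$ and $\min$ of $\lambda_j^2$ over $\Lambda(A)\setminus\lambda$; it then optimizes the function $f(z;a^2)=a^2\cos^2 z+\sin^2 z$ over the angle variable $z\in[0,\pi/2]$, which forces a case analysis on whether $\lambda$ equals $\lambda_{\max}$ (respectively $\lambda_{\min}$) to recover the clean global constants $\lambda^2/\lambda_{\max}^2$ and $\lambda^2/\lambda_{\min}^2$. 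You instead keep the angle (i.e., $a$ and $b$) fixed, bound $c=\|Aw\|$ by the crude global range $|\lambda_{\min}|\,b \le c \le |\lambda_{\max}|\,b$, and exploit monotonicity of $R^2$ in $c$; the endpoint substitutions then collapse exactly to $|\lambda_{\min}|\le|\lambda|\le|\lambda_{\max}|$ with no case distinctions. What the paper's route buys is sharper intermediate information (the bounds $a_0^2,a_1^2$ involve only eigenvalues other than $\lambda$, which could support a finer statement); what your route buys is economy --- the global singular-value bound is all that the stated lemma needs, and the case analysis disappears. Both handle the degenerate $\phi=0$ (equivalently $w=0$) case identically.
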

\begin{proof}
Let us fist introduce the notation $\phi = \angle(x,y)$, $\phi_A = \angle(x,A y)$ 
and observe that $\phi_A = 0$ if and only if 
$\phi = 0$. Hence, if $\phi = 0$ then bound~\eqref{eq:sin} is trivial. 
Therefore, in what follows, we consider only the case where $0 < \phi \leq \pi/2$. 

Without loss of generality, we assume that both $x$ and $y$ are unit vectors.
Then, since $x$ is an eigenvector, we observe that 
\begin{eqnarray*}
\cos\phi = |x^* y|  = \left| \frac{x^* A y}{\lambda} \right| = \left( \frac{\|Ay\|}{|\lambda|} \right) \left( \frac{|x^* A y|}{\|Ay\|} \right)
 = \left( \frac{\|Ay\|}{|\lambda|} \right) \cos \phi_A, 
\end{eqnarray*}
where
$\cos \phi = |x^* y|$ and $\cos \phi_A = |x^* A y| /\|Ay\|$. This relation 
implies that
\eq{eq:sin_ratio}
\frac{\sin^2 \phi}{\sin^2 \phi_A} = \frac{\sin^2 \phi}{1 - \cos^2 \phi_A} =  
\frac{\|Ay\|^2 \sin^2 \phi}{\|Ay\|^2 - \lambda^2 \cos^2 \phi}.
\en

Let $y = x (x^*y) + w (w^* y)$ be a representation of $y$ in terms of the eigenvector~$x$ 
and a unit vector $w$ orthogonal to $x$. Then
$\|Ay\|^2 = \lambda^2 \cos^2\phi +  \left(w^* A^2 w\right) \sin^2\phi$,
where $\sin^2\phi = |w^* y|^2 = \cos^2\angle(w,y)$.  
Substituting this expression into the right-hand side 
of~\eqref{eq:sin_ratio} gives
\eq{eq:sin_ratio1}
\frac{\sin^2 \phi}{\sin^2 \phi_A} = \left( \frac{\lambda^2}{w^* A^2 w} \right) \cos^2 \phi + \sin^2 \phi.
\en

By the Courant-Fischer theorem~\cite{Parlett:98, Saad-book3}, 
$a_0^2 \leq \lambda^2/(w^* A^2 w) \leq a_1^2$,  
where 
\eq{eq:a0}
a_0^2 = \min_{q \in x^{\perp}, \atop \|q\| = 1} \frac{\lambda^2}{q^* A^2 q} = 
\frac{\lambda^2}{\displaystyle \max_{\lambda_j \in \Lambda(A) \setminus \lambda} \lambda_j^2},
\en
and
\eq{eq:a1}
a_1^2 = \max_{q \in x^{\perp}, \atop \|q\| = 1} \frac{\lambda^2}{q^* A^2 q} =  
\frac{\lambda^2}{\displaystyle \min_{\lambda_j \in \Lambda(A) \setminus \lambda} \lambda_j^2}.
\en
Thus, from~\eqref{eq:sin_ratio1}--\eqref{eq:a1}, we obtain
\eq{eq:sin_ratio2}
a_{0}^2 \cos^2 \phi + \sin^2 \phi \leq
\frac{\sin^2 \phi}{\sin^2 \phi_A} \leq 
a_{1}^2 \cos^2 \phi + \sin^2 \phi.
\en

Let us now consider the function $f(z; a) = a^2 \cos^2 z + \sin^2 z$, where 
$z$ is a variable and $a^2$ is a fixed positive parameter.
Then~\eqref{eq:sin_ratio2}~can be written as
\eq{eq:sin_ratio3}
f(\phi; a_{0}^2) \leq 
\frac{\sin^2 \phi}{\sin^2 \phi_A} \leq
f( \phi; a_{1}^2).
\en
Hence, for any
$0 < \phi \leq  \pi/2$, 
\eq{eq:sin_ratio4}
\min_{z \in [0,\pi/2]} f(z; a_{0}^2) \leq 
\frac{\sin^2 \phi}{\sin^2 \phi_A} \leq 
\max_{z \in [0,\pi/2]} f(z; a_{1}^2).
\en

It is easy to check, 
by differentiation, 
that $f(z;a^2)$ is monotonically increasing
on $[0,\pi/2]$ if $a^2 \leq 1$. If $a^2 \geq 1$, then the function is  
decreasing. 

From~\eqref{eq:a0}, we see that $a_0^2 < 1$ if $\lambda \neq \lambda_{\max}$, 
where $\lambda_{\max}$ is an eigenvalue of~$A$ of the largest absolute
value. Therefore, in this case, $f(z;a_0^2)$ is increasing on $[0,\pi/2]$
and its minimum 
is given by 
$f(0; a_{0}^2) = \lambda^2/\lambda^2_{\max}$.
At the same time, if  $\lambda = \lambda_{\max}$, then $a_0^2 \geq 1$,
and, hence, $f(z; a_{0}^2)$ is decreasing on $[0,\pi/2]$. 
Therefore, the minimum is delivered by $f(\pi/2; a_{0}^2) = 1$. Thus, we get
\eq{eq:fmin}
\min_{z \in [0,\pi/2]} f(z; a_{0}^2) = \left\{
\begin{array}{cl}
\lambda^2/\lambda_{\max}^2, & \mbox{if} \ \lambda \neq \lambda_{\max}, \\
1, & \mbox{if} \ \lambda = \lambda_{\max}. \\
\end{array}
\right.
\en 
After combining the both cases in~\eqref{eq:fmin}, we conclude that
\eq{eq:lhs_bound}
\min_{z \in [0,\pi/2]} f(z; a_{0}^2) = \lambda^2/\lambda_{\max}^2. 
\en 
 

Similarly, by~\eqref{eq:a1}, $a_1^2 > 1$ if $\lambda \neq \lambda_{\min}$, 
where $\lambda_{\min}$ denotes an eigenvalue of~$A$ of the smallest absolute
value; and $a_1^2 \leq 1$ otherwise. By applying exactly the same 
argument, based on the monotonicity of $f(z;a_1^2)$, as above, we obtain
\eq{eq:rhs_bound}
\max_{z \in [0,\pi/2]} f(z; a_{1}^2) = \lambda^2/\lambda_{\min}^2.
\en 
Substituting~\eqref{eq:lhs_bound} and~\eqref{eq:rhs_bound} 
into~\eqref{eq:sin_ratio4}
and taking the square root of all parts of the inequality gives~\eqref{eq:sin}.
\end{proof}

Note that Lemma~\ref{lem:sin} suggests that, in particular, if $(\lambda,x)$ is an 
eigenpair corresponding to the smallest magnitude eigenvalue, then $\angle(x,y) \leq \angle(x,Ay)$,
i.e., the approximation quality of a vector $y$ does not improve after multiplication with $A$.
On the other hand, if $(\lambda,x)$ is associated with the largest magnitude eigenvalue, then
$\angle(x,y) \geq \angle(x,Ay)$. The latter is not surprising, because a step of the power method
applied to $y$ is expected to yield a better approximation to the dominant eigenvector. 

Given a subspace $\calK$, the following corollary relates $\angle(x,\calK)$ 
and~$\angle(x,A\calK)$. 

\begin{corollary}\label{cor:sin_subsp}
Let $(\lambda,x)$ be an eigenpair of a nonsingular Hermitian matrix $A$. Then for
any subspace $\calK$ of $\IC^n$, we have
\begin{equation}\label{eq:sin_subsp}  
\sin \angle (x,A\calK) \leq |\lambda_{\max}/\lambda| \sin \angle (x,\calK). 
\end{equation}  
\end{corollary}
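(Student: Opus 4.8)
The plan is to obtain the corollary as an immediate consequence of the left-hand inequality in Lemma~\ref{lem:sin}. Rearranging the left part of~\eqref{eq:sin} gives, for every nonzero vector $y$,
\[
\sin\angle(x, Ay) \leq |\lambda_{\max}/\lambda|\, \sin\angle(x, y).
\]
The key observation I would exploit is that this pointwise estimate lifts to the subspace level if it is evaluated at a best approximation of $x$ from $\calK$.

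First I would recall that, since the sine is monotonically increasing on $[0,\pi/2]$ and all the angles involved lie in this range, the subspace angles can be written as $\sin\angle(x,\calK) = \min_{y\in\calK,\,y\neq0}\sin\angle(x,y)$ and $\sin\angle(x,A\calK)=\min_{z\in A\calK,\,z\neq0}\sin\angle(x,z)$. Because the unit sphere of $\calK$ is compact, the first minimum is attained, say at a unit vector $y_\ast\in\calK$, so that $\sin\angle(x,\calK)=\sin\angle(x,y_\ast)$.

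Next, since $A$ is nonsingular, $A y_\ast$ is a nonzero element of $A\calK$ and is therefore an admissible competitor in the minimization defining $\sin\angle(x,A\calK)$. This gives $\sin\angle(x,A\calK)\le\sin\angle(x,Ay_\ast)$, and applying the pointwise bound above with $y=y_\ast$ then yields
\[
\sin\angle(x,A\calK)\le\sin\angle(x,Ay_\ast)\le|\lambda_{\max}/\lambda|\,\sin\angle(x,y_\ast)=|\lambda_{\max}/\lambda|\,\sin\angle(x,\calK),
\]
which is exactly~\eqref{eq:sin_subsp}.

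I do not expect any substantial obstacle, as the statement is a direct corollary of Lemma~\ref{lem:sin}. The only point requiring a little care is the reduction from the subspace angle to a single best-approximating vector: I must ensure both that the minimizer $y_\ast$ exists (compactness of the unit sphere in $\calK$) and that $Ay_\ast\neq0$ (nonsingularity of $A$), so that every angle appearing in the argument is well defined.
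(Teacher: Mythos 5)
Your proposal is correct and is essentially the paper's own argument: apply the left-hand inequality of Lemma~\ref{lem:sin} at a minimizer $y_\ast \in \calK$ of $\angle(x,y)$, then note that $Ay_\ast$ is an admissible competitor in the minimization defining $\angle(x,A\calK)$. The extra care you take about existence of $y_\ast$ and $Ay_\ast \neq 0$ is a minor tightening of details the paper leaves implicit.
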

\begin{proof}
From the left-hand side of~\eqref{eq:sin}, we have 
\[
\sin\angle(x,Ay) \leq |\lambda_{\max}/\lambda| \sin\angle(x,y).
\]
This inequality holds for any vector $y$. In particular, it is true for some $y_* \in \calK$
that yields the minimum of $\angle(x,y)$ over all $y$ in~$\calK$.
By definition~\eqref{eq:sin_def}, $\angle(x,y_*)$
is exactly the angle between the vector $x$ and 
the subspace~$\calK$. 
Thus, 
we obtain
\eq{eq:bnd3}
\sin \angle (x, A y_*) \leq |\lambda_{\max}/\lambda| \sin \angle (x, \calK). 
\en 
On the other hand,
\[
\angle (x, A \calK) = \min_{y \in \calK, y\neq 0}\angle(x,Ay) \leq \angle (x, A y_*).
\]
Therefore, $\sin \angle (x, A \calK) \leq \sin \angle (x, A y_*)$. Combining this inequality
with~\eqref{eq:bnd3} leads to~\eqref{eq:sin_subsp}, which completes the proof. 
\end{proof}

We are now ready to state the main result.
\begin{theorem}\label{thm:hsaad}
Let $(\lambda, x)$ be an eigenpair of a Hermitian matrix $A$ and
$(\theta, v)$ be a harmonic Ritz pair with respect to the subspace $\calK$
and shift $\sigma \notin \Lambda(A)$.
Assume that~$\Theta$ is a set of all the harmonic Ritz values and 
let $P_{\calQ}$ be an orthogonal projector onto $\calQ = (A - \sigma I) \calK$
. 
Then
\eq{eq:hsaad}
\sin \angle (x, v) \leq \kappa(A - \sigma I) \sqrt{1 + \frac{\gamma^2}{\delta^2}} \sin \angle(x, \cal K),
\en
where
$\gamma = \|P_{\calQ} (A - \sigma I)\inv (I - P_{\calQ} ) \|$, 
\eq{eq:kappa}
\kappa(A - \sigma I) = \frac{\displaystyle \max_{\lambda_j \in \Lambda(A)}|\lambda_j - \sigma|}
{\displaystyle \min_{\lambda_j \in \Lambda(A)}|\lambda_j - \sigma|},
\en
and
\eq{eq:gamma_delta}
\delta = \displaystyle \min_{\theta_j \in \Theta \setminus \theta} \left| \frac{\theta_j - \lambda}
{(\lambda - \sigma)(\theta_j - \sigma)} \right|.
\en
\end{theorem}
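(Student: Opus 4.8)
The plan is to reduce the statement to the Saad bound already established in Theorem~\ref{thm:saad}, applied to a shift-and-invert operator. Write $B = A - \sigma I$, which is Hermitian and nonsingular because $\sigma \notin \Lambda(A)$, and set $\xi = \theta - \sigma$. Using $A v - \theta v = B v - \xi v$, the Petrov--Galerkin condition~\eqref{eq:galerkin} reads $B v - \xi v \perp B\calK$ with $v \in \calK$. I would then substitute $w = B v \in B\calK = \calQ$, so that $v = B\inv w$ and $B v - \xi v = -\xi\lt( B\inv w - \xi\inv w\rt)$. Since $\xi \neq 0$ (a harmonic Ritz value cannot equal $\sigma$: otherwise $(A-\sigma I)v = 0$ would force $\sigma \in \Lambda(A)$), the condition is equivalent to
\[
B\inv w - \xi\inv w \perp \calQ, \quad w \in \calQ .
\]
This is exactly the Galerkin condition defining $(\xi\inv, w)$ as an \emph{ordinary} Ritz pair of the matrix $B\inv = (A-\sigma I)\inv$ with respect to $\calQ$. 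Correspondingly, the eigenpair $(\lambda,x)$ of $A$ becomes the eigenpair $\lt((\lambda-\sigma)\inv, x\rt)$ of $B\inv$.

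With this identification I would invoke Theorem~\ref{thm:saad} for the matrix $B\inv$, the subspace $\calQ$, the eigenpair $\lt((\lambda-\sigma)\inv, x\rt)$, and the Ritz pair $(\xi\inv, w)$, obtaining
\[
\sin\angle(x,w) \leq \sqrt{1 + \gamma^2/\delta^2}\,\sin\angle(x,\calQ),
\]
where $\gamma = \|P_{\calQ} B\inv (I - P_{\calQ})\|$ matches the stated $\gamma$, and $\delta = \min_{\theta_j \neq \theta}\lt|(\lambda-\sigma)\inv - (\theta_j-\sigma)\inv\rt|$. A routine simplification, $(\lambda-\sigma)\inv - (\theta_j-\sigma)\inv = (\theta_j-\lambda)/[(\lambda-\sigma)(\theta_j-\sigma)]$, recovers $\delta$ in the form~\eqref{eq:gamma_delta}.

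It then remains to transfer the two angles from the $\calQ$/$B\inv$ setting back to $\calK$/$A$. Since $w = B v$, the upper bound of Lemma~\ref{lem:sin} applied to the Hermitian nonsingular $B$ (whose eigenvalue at $x$ is $\lambda-\sigma$, and whose smallest-magnitude eigenvalue is $\min_{\lambda_j}|\lambda_j-\sigma|$) gives $\sin\angle(x,v) \leq \frac{|\lambda-\sigma|}{\min_{\lambda_j}|\lambda_j-\sigma|}\sin\angle(x,w)$. Likewise, because $\calQ = B\calK$, Corollary~\ref{cor:sin_subsp} applied to $B$ gives $\sin\angle(x,\calQ) \leq \frac{\max_{\lambda_j}|\lambda_j-\sigma|}{|\lambda-\sigma|}\sin\angle(x,\calK)$. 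Chaining these two estimates with the Saad bound above, the factors $|\lambda-\sigma|$ cancel and the surviving ratio $\max_{\lambda_j}|\lambda_j-\sigma|/\min_{\lambda_j}|\lambda_j-\sigma|$ is exactly $\kappa(A-\sigma I)$ from~\eqref{eq:kappa}, yielding~\eqref{eq:hsaad}.

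The essential and only nonroutine step is the opening reformulation: recognizing that harmonic Rayleigh--Ritz for $A$ over $\calK$ is ordinary Rayleigh--Ritz for $(A-\sigma I)\inv$ over $(A-\sigma I)\calK$. Once this is in hand, Theorem~\ref{thm:saad} supplies the $\sqrt{1+\gamma^2/\delta^2}$ factor directly, while Lemma~\ref{lem:sin} and Corollary~\ref{cor:sin_subsp} handle the angle transfer. The one point demanding care is the bookkeeping of the $|\lambda-\sigma|$ factors, which must cancel precisely so as to leave the clean condition number $\kappa(A-\sigma I)$.
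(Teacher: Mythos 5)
Your proof is correct and follows essentially the same route as the paper's: both reduce harmonic Rayleigh--Ritz for $A$ over $\calK$ to ordinary Rayleigh--Ritz for $(A-\sigma I)\inv$ over $\calQ=(A-\sigma I)\calK$, apply Theorem~\ref{thm:saad} there, and then transfer the angles back using Lemma~\ref{lem:sin} and Corollary~\ref{cor:sin_subsp} with respect to $S=A-\sigma I$, with the $|\lambda-\sigma|$ factors cancelling to give $\kappa(A-\sigma I)$. The only (cosmetic) difference is that you perform the reduction directly on the Petrov--Galerkin orthogonality condition, whereas the paper rewrites the projected eigenvalue problem~\eqref{eq:hrr} in the pencil form~\eqref{eq:hrr2}.
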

\begin{proof}
We first observe that the eigenvalue problem~\eqref{eq:hrr} can be 
formulated~as
\eq{eq:hrr2}
(SK)^* S\inv (SK) c = \tau (SK)^*(SK)c, \quad S = A - \sigma I,
\en   
where the matrix $S$ is nonsingular because $\sigma \notin \Lambda(A)$. 
Each  
eigenpair $(\tau, c)$ of~\eqref{eq:hrr2} 
yields an eigenpair $(\xi,c)$ 
of~\eqref{eq:hrr} with $\xi = 1/\tau$. Thus, given $(\tau, c)$,
the corresponding harmonic Ritz pair $(\theta, v)$ is defined by $\theta = 1/\tau + \sigma$ and $v = Kc$. 

At the same time, problem~\eqref{eq:hrr2} 
corresponds to
the Rayleigh--Ritz procedure for the matrix $S\inv$ with respect to the
subspace $\calQ = S\calK$, in which case a Ritz pair is given by $(\tau, Sv)$,
where $v = K c$ is the harmonic Ritz vector. 
The eigenvalues of $S\inv$ are related to the eigenvalues $\lambda$ of $A$ as
$1/(\lambda-\sigma)$, and the corresponding eigenvectors $x$ coincide. 
Then Theorem~\ref{thm:saad} guarantees that for an eigenpair 
$(1/(\lambda-\sigma), x)$ of $S^{-1}$ and a Ritz pair $(\tau, Sv)$, 
\eq{eq:hsaad0}
\sin \angle (x, Sv) \leq \sqrt{1 + \frac{\gamma^2}{\delta^2}} \sin \angle(x, \calQ),
\en   
with $\gamma = \|P_{\calQ} S\inv (I - P_{\calQ} ) \|$ and,
since $\tau = 1/(\theta - \sigma)$,
\[
\delta = \min_{\theta_j \in \Theta \setminus \theta} 
|\frac{1}{\lambda-\sigma} - \frac{1}{\theta_j - \sigma}| = 
\displaystyle \min_{\theta_j \in \Theta \setminus \theta} \left| \frac{\theta_j - \lambda}
{(\lambda - \sigma)(\theta_j - \sigma)} \right|.
\]  

Clearly, if $(\lambda,x)$ is an eigenpair of $A$ then $(\lambda-\sigma, x)$
is an eigenpair of $S$. Therefore, recalling
that $\calQ = S\calK$, we can apply Corollary~\ref{cor:sin_subsp} with respect to
$S$ to bound $\sin\angle(x,\calQ)$ in~\eqref{eq:hsaad0} from above by a term proportional to 
$\sin\angle(x,\calK)$. 
As a result, from~\eqref{eq:hsaad0}, we get
\eq{eq:hsaad1}
\sin \angle (x, Sv) \leq \frac{\displaystyle \max_{\lambda_j \in \Lambda(A)}|\lambda_j - \sigma|}
{|\lambda - \sigma|} \sqrt{1 + \frac{\gamma^2}{\delta^2}} \sin \angle(x, \calK).
\en   
On the other hand, by Lemma~\ref{lem:sin}, also applied with respect to $S$, we obtain 
\eq{eq:hsaad2} 
\sin\angle(x,v) \leq \frac{|\lambda - \sigma|}{\displaystyle \min_{\lambda_j \in \Lambda(A)}|\lambda_j - \sigma|} \sin\angle(x,Sv). 
\en 
The desired bound~\eqref{eq:hsaad} 
then follows from~\eqref{eq:hsaad1} and~\eqref{eq:hsaad2}. 
\end{proof}

Theorem~\ref{thm:hsaad} shows that the approximation quality of 
the harmonic Rayleigh--Ritz procedure can be hindered by a poor conditioning
of $A - \sigma I$. In particular, this can happen if the shift $\sigma$
is chosen to be close to an eigenvalue of $A$.  

Furthermore, the structure of the quantity $\delta$ in~\eqref{eq:gamma_delta} 
suggests that the proximity of the harmonic Ritz vectors to exact eigenvectors 
can be affected by~clustering of $A$'s eigenvalues, 
in which case $\delta$ can be close to zero. 
The smallness of $\delta$ can also be caused by a 
large difference $|\lambda - \sigma|$ in the denominator of~\eqref{eq:gamma_delta}.~This indicates that   
the harmonic Rayleigh--Ritz scheme should be most efficient for 
approximating the eigenpairs associated with the eigenvalues 
closest to a given shift $\sigma$. 
Note that 
$\delta$ is finite, as 
$\sigma \neq \lambda$ and $\theta_j \neq \sigma$~\cite[Theorem 2.2]{Morgan.Zeng:98}. 

Bound~\eqref{eq:hsaad} implies that a harmonic Ritz vector $v$ must approach 
the exact eigenvector $x$ as the angle between $x$ and the subspace $\calK$ decreases,
provided that there is only one harmonic Ritz value that converges to the targeted 
eigenvalue $\lambda$. 
In the opposite case, which can occur if $\lambda$ is a multiple eigenvalue,
the quantity~$\delta$ converges to zero 
(the set $\Theta$ in~\eqref{eq:gamma_delta} assumes repetition of multiple harmonic
Ritz values). Hence, in this setting, 
bound~\eqref{eq:hsaad} may not guarantee that $\angle(x,v)$ is small whenever 
$\angle(x,\calK)$ is sufficiently small. 
%
This limitation, however, is natural as it reflects the fact that the direction of $x$ is 
not unique in the case of a multiple $\lambda$ and that the harmonic Ritz vector 
can tend to approximate any other element of the associated eigenspace.
A proper extension of Theorem~\ref{thm:hsaad}, which gives a meaningful bound in the case
where $\lambda$ has multiplicity greater than 1,  
will be considered below in Section~\ref{sec:mult}. 
%




The result of Theorem~\ref{thm:hsaad} is very general in that it 
holds for any choice of the subspace $\calK$. Hence, it is rather pessimistic.
In particular, practical eigensolvers
construct the subspace $\calK$,
often called the trial or search subspace, very carefully, 
in such a way that it does not contain 
contributions from unwanted eigenvectors.   

We address this practical setting in the following corollary. 
It shows that if $\calK$ is chosen from an invariant subspace of $A$
associated with only a part of its spectrum, which however contains the wanted eigenvalues,
then the condition number in the right-hand side of~\eqref{eq:hsaad} can be reduced.

\begin{corollary}\label{cor:hsaad_defl}
Let $X$ be a matrix whose  columns represent an orthonormal basis of an invariant subspace  
of $A$ associated with a subset 
$\Lambda_X(A) \subseteq \Lambda(A)$ of its eigenvalues, and assume that $\calK \subseteq \mbox{\emph{range}}(X)$.
Let $(\lambda, x)$ be an eigenpair of $A$, such that~$\lambda \in \Lambda_X(A)$, and
let $(\theta, v)$ be a harmonic Ritz pair with respect to $\calK$
and $\sigma \notin \Lambda_X(A)$.
Assume that~$\Theta$ is a set of all the harmonic Ritz values and 
let $P_{\calQ}$ be an orthogonal projector onto $\calQ = X^* (A - \sigma I) \calK$
. 
Then
\eq{eq:hsaad_defl}
\sin \angle (x, v) \leq \kappa(X^* A X - \sigma I) \sqrt{1 + \frac{\gamma^2}{\delta^2}} \sin \angle(x, \cal K),
\en
where
$\gamma = \|P_{\calQ} (X^*AX - \sigma I)\inv (I - P_{\calQ} ) \|$, 
\eq{eq:kappa_defl}
\kappa(X^* A X - \sigma I) = \frac{\displaystyle \max_{\lambda_j \in \Lambda_X(A)}|\lambda_j - \sigma|}
{\displaystyle \min_{\lambda_j \in \Lambda_X(A)}|\lambda_j - \sigma|},
\en
and $\delta$ is defined in~\eqref{eq:gamma_delta}.
\end{corollary}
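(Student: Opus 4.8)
The plan is to reduce the deflated setting to the unrestricted Theorem~\ref{thm:hsaad} by compressing $A$ onto the invariant subspace $\text{range}(X)$. Since the columns of $X$ form an orthonormal basis of an invariant subspace, we have $AX = X\hat{A}$ with $\hat{A} = X^*AX$ Hermitian and $\Lambda(\hat{A}) = \Lambda_X(A)$. Because $\calK \subseteq \text{range}(X)$, I can write the basis matrix as $K = X\hat{K}$ for some $\hat{K}$, so that each $y = Kc \in \calK$ corresponds to $\hat{y} = \hat{K}c$ in $\hat{\calK} = \text{range}(\hat{K}) \subseteq \IC^m$, where $m$ is the number of columns of $X$. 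Similarly, since $\lambda \in \Lambda_X(A)$ forces $x \in \text{range}(X)$, I write $x = X\hat{x}$ with $\hat{A}\hat{x} = \lambda\hat{x}$.

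First I would show that the harmonic Rayleigh--Ritz eigenproblem~\eqref{eq:hrr} for $A$ over $\calK$ coincides with the one for $\hat{A}$ over $\hat{\calK}$. Using $(A - \sigma I)X = X(\hat{A} - \sigma I)$ and $X^*X = I$, the substitution $K = X\hat{K}$ gives $K^*(A - \sigma I)^2 K = \hat{K}^*(\hat{A} - \sigma I)^2\hat{K}$ and $K^*(A - \sigma I)K = \hat{K}^*(\hat{A} - \sigma I)\hat{K}$. Hence the two matrix pencils are literally identical, so the harmonic Ritz values $\theta$ (and the set $\Theta$) agree, and the harmonic Ritz vector satisfies $v = Kc = X\hat{v}$ with $\hat{v} = \hat{K}c$. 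The same substitution shows $\calQ = X^*(A - \sigma I)\calK = (\hat{A} - \sigma I)\hat{\calK}$, which identifies the projector $P_\calQ$ and the quantity $\gamma$ in~\eqref{eq:hsaad_defl} with those produced by Theorem~\ref{thm:hsaad} when applied to $\hat{A}$.

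Next I would apply Theorem~\ref{thm:hsaad} directly to the Hermitian matrix $\hat{A}$, its eigenpair $(\lambda, \hat{x})$, the harmonic Ritz pair $(\theta, \hat{v})$, and the shift $\sigma \notin \Lambda(\hat{A})$ (so $\hat{A} - \sigma I$ is nonsingular). This yields $\sin\angle(\hat{x}, \hat{v}) \leq \kappa(\hat{A} - \sigma I)\sqrt{1 + \gamma^2/\delta^2}\,\sin\angle(\hat{x}, \hat{\calK})$, where $\kappa(\hat{A} - \sigma I) = \kappa(X^*AX - \sigma I)$ by~\eqref{eq:kappa} since $\Lambda(\hat{A}) = \Lambda_X(A)$, and $\delta$ is exactly~\eqref{eq:gamma_delta} because the harmonic Ritz values coincide with those of $A$. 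To finish, I would use that $X$ acts as an isometry on $\text{range}(X)$: for vectors and subspaces contained in $\text{range}(X)$, angles are preserved under the coordinate map, so $\angle(x, v) = \angle(\hat{x}, \hat{v})$ and $\angle(x, \calK) = \angle(\hat{x}, \hat{\calK})$. Substituting these identities into the compressed bound gives~\eqref{eq:hsaad_defl}.

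The only real obstacle is the bookkeeping of the reduction --- verifying that the compressed harmonic Rayleigh--Ritz pencil is the original one (not merely similar to it), and that the deflated $\gamma$, $\delta$, $\calQ$, and $\kappa$ all match the quantities obtained by applying Theorem~\ref{thm:hsaad} to $\hat{A}$. Once the isometry and the invariance relation $AX = X\hat{A}$ are in place, every step is a mechanical substitution; the key conceptual point is simply that restricting $\calK$ to an invariant subspace replaces the full spectrum $\Lambda(A)$ by its relevant part $\Lambda_X(A)$ in the condition number, which is precisely what tightens~\eqref{eq:hsaad} into~\eqref{eq:hsaad_defl}.
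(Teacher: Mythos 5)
Your proposal is correct and follows essentially the same route as the paper's own proof: write $K = X\hat{K}$, observe that the harmonic Rayleigh--Ritz pencil for $A$ over $\calK$ collapses to the pencil for $X^*AX$ over the compressed subspace, apply Theorem~\ref{thm:hsaad} to $X^*AX$, and transfer the angles back via the isometry of $X$. The bookkeeping you flag (identifying $\calQ$, $\gamma$, $\delta$, and the harmonic Ritz values with their compressed counterparts) is exactly what the paper verifies.
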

\begin{proof}
Since $\calK \subseteq \mbox{range} (X)$, a basis $K$ of this subspace
can be expressed as $K = X W$, where $W$ is a $k$-by-$s$  
matrix, with $k$ being the number of columns in $X$ and $s = \dim(\calK)$.
Substituting $K = X W$ into~\eqref{eq:hrr}, and using the fact that 
the columns of $X$ are orthonormal and span an invariant subspace of $A$, 
gives 
\eq{eq:rrX}
W^* (X^* A X - \sigma I)^2 W c = \xi W^*(X^* A X - \sigma I) W c. 
\en   
This eigenvalue problem corresponds to the 
harmonic Rayleigh--Ritz
procedure for the matrix $X^*AX$ with respect to the subspace 
$\calW = \mbox{range}(W) \subseteq \IC^k$
and shift $\sigma$. The eigenvalues of $X^* A X$ are exactly the eigenvalues $\lambda$
of $A$ in $\Lambda_X(A)$, whereas the associated eigenvectors $\hat x$
are related to those of $A$ by $x = X \hat x$. 

The harmonic Ritz pairs $(\theta, \hat v)$ of $X^*AX$ 
are defined by the eigenpairs of~\eqref{eq:rrX}, such that $\theta = \xi + \sigma$ and 
$\hat v = W c$. Then, by Theorem~\ref{thm:hsaad}, applied with respect to
$X^*AX$ and $\calW$,  
for each eigenpair $(\lambda, \hat x)$ of $X^*AX$ and a harmonic Ritz pair~$(\theta, \hat v)$,
\eq{eq:hsaad_defl0}
\sin\angle(\hat x, \hat v)\leq\kappa(X^*AX - \sigma I) \sqrt{1 + \frac{\gamma^2}{\delta^2}}\sin\angle(\hat x, \calW),
\en  
where $\kappa(X^*AX - \sigma I)$ is defined in~\eqref{eq:kappa_defl} and 
$\gamma = \|P_{\calQ} (X^*AX - \sigma I)\inv (I - P_{\calQ} ) \|$ with 
$\calQ = (X^* A  X - \sigma I)\calW = X^* (A - \sigma I) X\calW = X^* (A - \sigma I) \calK$,
since $\calK = X\calW$.
The quantity $\delta$ is given by~\eqref{eq:gamma_delta}, where the set 
$\Theta$ of the harmonic Ritz values of $X^*AX$ with respect to $\calW$
coincides with the harmonic Ritz values of $A$ over~$\calK$.   
But $\angle(\hat x, \hat v) = \angle(X \hat x, X \hat v) = \angle(x, v)$, since
$X$ has orthonormal columns and $X \hat v = X W c = Kc = v$, where $v$ is a harmonic Ritz
vector of~$A$ with respect to $\calK$ associated with the harmonic Ritz value $\theta$. 
Similarly, $\angle(\hat x, \calW) = \angle(X \hat x, X \calW) = \angle(x, \calK)$. 
Thus,~\eqref{eq:hsaad_defl} follows from~\eqref{eq:hsaad_defl0}.
\end{proof}

In particular, Corollary~\ref{cor:hsaad_defl} implies that choosing 
$\calK$ from the invariant subspace of $A$ associated with the eigenvalues 
$\{ \lambda_1, \lambda_2, \ldots, \lambda_k \}$
that are closest to~$\sigma$, such that 
$|\lambda_1 - \sigma| \leq | \lambda_2 - \sigma | \leq \ldots \leq |\lambda_k - \sigma |$, 
yields the effective condition number of $|\lambda_k - \sigma|/|\lambda_1 - \sigma|$, which can be 
much lower than $\kappa(A - \sigma I) = |\lambda_n - \sigma|/|\lambda_1 - \sigma|$ 
suggested by Theorem~\ref{thm:hsaad},
where $\lambda_n$ is an eigenvalue of $A$ that is the most distant from $\sigma$. 
In practice, such a choice of $\calK$ is achieved by damping out the unwanted 
eigenvector components from a trial subspace, e.g., using filtering or preconditioning 
techniques; 
e.g.,~\cite{Fang.Saad:12, Knyazev:01, Sleijpen.Vorst:96, Tang.Polizzi:14, Ve.Kn:14TR}. 


\section{Related work}\label{sec:relres} 

Other bounds for the harmonic Ritz vectors were established in~\cite{Chen.Jia:04, Jia:04}.  
These results are more general than~\eqref{eq:hsaad} 
in that they hold for any $A$, which can be non-Hermitian. 
However,
as we will see below, 
in the Hermitian case,~which is the focus of this paper,
the presented bound~\eqref{eq:hsaad} turns out to be more descriptive.  

In particular, if $A$ is Hermitian, the result of~\cite{Chen.Jia:04} states
that
\eq{eq:chen.jia}
\sin \angle (x, v) \leq  \sqrt{1 + \frac{\gamma_1^2 \|B^{-1}\|^2}{\mbox{sep}(\lambda,G)^2}} \sin \angle(x, \cal K),
\en
where $B = K^* (A - \sigma I) K$, $\gamma_1 = \|P_{\calK} (A - \sigma I) (A - \lambda I) (I - P_{\calK} ) \|$, and $P_{\calK}$ denotes an orthogonal projector onto $\calK$. The quantity
$$
\mbox{sep}(\lambda,G) = \|(G - \lambda I)^{-1}\|^{-1}
$$
describes separation of the targeted eigenvalue $\lambda$
from the harmonic Ritz values different from the one associated with $v$,
which are the eigenvalues of $G$; 
see~\cite{Chen.Jia:04} for the precise definition of $G$.

The result of~\cite{Jia:04} suggests that 
\eq{eq:jia}
\sin \angle (x, v) \leq \left( 1 + \frac{2 \|B^{-1}\|^2 \gamma_2^2}{\sqrt{1 - \sin^2 \angle(x,\calK)} \mbox{sep}(\lambda,G)} \right) \sin \angle(x, \cal K),
\en
where, if $A$ is Hermitian, the matrix $B = K^* (A - \sigma I) K$ is the same as in~\eqref{eq:chen.jia} and 
$\gamma_2$ is the maximum value of $|\lambda - \sigma|$ over all eigenvalues $\lambda$ of $A$. 
Similarly, $\mbox{sep}(\lambda,G)$ gives a separation of 
$\lambda$ from the unwanted harmonic Ritz values.

Both~\eqref{eq:chen.jia} and~\eqref{eq:jia} share a number of similarities with 
the bound~\eqref{eq:hsaad} of this paper. In particular, all of them show that the 
approximation quality of a Ritz vector depends on $\angle(x,\calK)$,
separation of $\lambda$, 
and the choice of the shift $\sigma$, which should 
not be too close to an eigenvalue. 

However, the main difference of~\eqref{eq:hsaad} 
is that it eliminates the dependence of the bound on the norm of $B^{-1} = (K^* (A - \sigma I) K)^{-1}$. 
This norm can generally be large or unbounded, even if $\sigma$ is well chosen, 
the subspace $\calK$ contains a good eigenvector approximation, 
and the corresponding eigenvalue is well separated.
%
As a result, bound~\eqref{eq:hsaad} guarantees 
that, if $\lambda$ is a simple eigenvalue, a harmonic Ritz vector $v$ must converge to the eigenvector $x$
as the angle between~$\calK$ and $x$ decreases 
(the same conclusion for eigenvalues of a higher multiplicity is obtained in the next section). 
By contrast, neither~\eqref{eq:chen.jia} nor~\eqref{eq:jia} can lead to this conclusion 
without an additional assumption on the uniform boundedness
of $\|B\|^{-1}$; see~\cite{Chen.Jia:04,Jia:04}.

\section{Extension on eigenspaces}\label{sec:mult}

As has already been discussed in Section~\ref{sec:harm}, bound~\eqref{eq:hsaad}
is not useful if the targeted eigenvector $x$ corresponds 
to an eigenvalue $\lambda$ of multiplicity greater~than~1. In this case,
instead of an individual eigenvector $x$,
the focus should be shifted on the eigenspace $\calX$ associated with $\lambda$. 
In particular, a question to ask is whether there exist a subspace $\calV$ 
spanned by harmonic Ritz vectors 
extracted from $\calK$,
which gives a good approximation to $\calX$, provided that 
$\calK$ contains a good approximation to the wanted eigenspace $\calX$.
 
We note that a similar limitation is also true for the original Saad's
bound of Theorem~\ref{thm:saad}. Fortunately, the Stewart's Theorem~\ref{thm:stewart}
suggests an appropriate extension on eigenspaces, which is stated in the following corollary.    

\begin{corollary}\label{cor:stewart}
Let $\calX$ be an eigenspace associated with an eigenvalue $\lambda$ of $A$, and
let 
$\calU$ be a subspace spanned by Ritz vectors associated with Ritz
values $\Theta_k = \left\{ \mu_1, \mu_2, \ldots, \mu_k \right\}$ 
with respect to~$\calK$. 
Assume that~$\Theta$ is a set of all the Ritz values and 
let $P_{\calK}$ be an orthogonal projector onto $\calK$.
Then
\eq{eq:stewart_mult}
\sin \angle (\calX, \calU) \leq \sqrt{1 + \frac{\gamma^2}{\delta^2}} \sin \angle(\calX, \calK),
\en
where $\gamma = \|P_{\calK} A (I - P_{\calK} ) \|_F$
and $\delta$ is defined by  
\eq{eq:delta0_sep_mult}
\delta = \displaystyle \min_{\mu_j \in \Theta \setminus \Theta_k }|\lambda - \mu_j|.
\en
\end{corollary}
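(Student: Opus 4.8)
The plan is to obtain the bound by a direct application of Stewart's Theorem~\ref{thm:stewart} and then to show that, under the present Hermitian and eigenspace hypotheses, Stewart's separation quantity~\eqref{eq:delta0_sep} collapses to the elementary expression~\eqref{eq:delta0_sep_mult}. Taking $\calX$ to be the eigenspace of $\lambda$, $\calU$ the Ritz subspace spanned by the Ritz vectors whose values lie in $\Theta_k$, and $\calV$ the orthogonal complement of $\calU$ in $\calK$, Theorem~\ref{thm:stewart} delivers~\eqref{eq:stewart_mult} verbatim, with $\gamma = \|P_{\calK} A (I - P_{\calK})\|_F$ carried over unchanged (the Frobenius norm being the admissible choice in Stewart's formulation). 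The entire content of the corollary therefore reduces to evaluating Stewart's $\delta$.

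First I would exploit that $\calX$ is a genuine eigenspace. If $X$ is an orthonormal basis of $\calX$, then $AX = \lambda X$ forces $X^* A X = \lambda I$, so the Sylvester operator in~\eqref{eq:delta0_sep} degenerates to a left multiplication,
\[
(V^* A V) Z - Z (X^* A X) = (V^* A V - \lambda I) Z .
\]
Hence $\delta = \inf_{\|Z\|=1}\|(V^* A V - \lambda I) Z\|$. Because the surviving right factor is a scalar multiple of the identity, this infimum is exactly the smallest singular value $\sigma_{\min}(V^* A V - \lambda I)$: routing $Z$ through the smallest singular direction of $V^* A V - \lambda I$ attains it, and this holds whether the spectral or the Frobenius norm is used, so the value is consistent with the $\gamma$ appearing in~\eqref{eq:stewart_mult}. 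Since $A$ is Hermitian and $V$ is orthonormal, $V^* A V$ is Hermitian, and therefore $\sigma_{\min}(V^* A V - \lambda I) = \min_j |\nu_j - \lambda|$, where the $\nu_j$ are the eigenvalues of $V^* A V$.

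It then remains to identify the $\nu_j$ with the complementary Ritz values. For this I would use the orthonormality of Ritz vectors in the Hermitian case: writing a Ritz vector as $u_i = Q c_i$, with $Q$ an orthonormal basis of $\calK$ and $c_i$ the orthonormal eigenvectors of $H = Q^* A Q$, one computes $u_i^* A u_j = c_i^* H c_j = \mu_j\,\delta_{ij}$. Thus the full family of Ritz vectors is an orthonormal basis of $\calK$; the orthogonal complement $\calV$ of $\calU$ is precisely the span of the Ritz vectors with values in $\Theta \setminus \Theta_k$; and choosing $V$ to collect exactly those vectors makes $V^* A V = \mathrm{diag}\bigl(\mu_j : \mu_j \in \Theta \setminus \Theta_k\bigr)$. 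Consequently $\delta = \min_{\mu_j \in \Theta \setminus \Theta_k}|\lambda - \mu_j|$, which is~\eqref{eq:delta0_sep_mult}.

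The step I expect to demand the most care is the reduction of the separation functional. One must verify that, once $X^* A X = \lambda I$, the infimum over unit $Z$ of $\|(V^* A V - \lambda I) Z\|$ really equals $\sigma_{\min}(V^* A V - \lambda I)$ in the norm actually invoked by Theorem~\ref{thm:stewart}, and, just as essentially, that selecting $V$ to be the complementary orthonormal Ritz basis is legitimate — that is, that this span genuinely coincides with Stewart's $\calV$ defined as the orthogonal complement of the Ritz subspace $\calU$ in $\calK$. Both points hinge on the orthonormality of the Hermitian Ritz vectors, which is the structural feature that the general (non-Hermitian) setting of Theorem~\ref{thm:stewart} does not provide.
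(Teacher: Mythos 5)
Your proof is correct and follows essentially the same route as the paper's: apply Stewart's Theorem~\ref{thm:stewart} with the Frobenius norm, use $X^*AX = \lambda I$ to collapse the Sylvester separation~\eqref{eq:delta0_sep} to $\inf_{\|Z\|=1}\|(V^*AV - \lambda I)Z\|$, and evaluate this as the minimal distance from $\lambda$ to the complementary Ritz values. The only differences are cosmetic: you compute the infimum via the smallest singular value of $V^*AV - \lambda I$ where the paper runs a column-wise Courant--Fischer argument (the two computations are equivalent), and you explicitly verify, via orthonormality of the Hermitian Ritz vectors, that the eigenvalues of $V^*AV$ are exactly the Ritz values in $\Theta \setminus \Theta_k$ --- a fact the paper asserts without proof.
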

\begin{proof}
We apply Theorem~\ref{thm:stewart} to the subspaces $\calX$ and $\calU$, such that 
$\calX$ is an eigenspace of $\lambda$ and $\calU$ is the Ritz subspace 
associated with $\Theta_k$; and 
choose~$\|\cdot\|$ to be~the~Frobenius norm. This immediately yields bound~\eqref{eq:stewart_mult},
where $\gamma = \|P_{\calK} A (I - P_{\calK} ) \|_F$. It then only remains to determine the value
of $\delta$.

Using the fact that $\calX$ is an eigenspace of $\lambda$, from~\eqref{eq:delta0_sep},  we obtain  
\[
\delta^2 = \displaystyle \min_{\|Z\|_F = 1}\|(V^* A V) Z  - Z (X^* A X)\|_F^2 =
\displaystyle \min_{\|Z\|_F = 1}\|(V^* A V) Z  - \lambda Z\|_F^2, 
\]
where the columns of $X$ and $V$ represent orthonormal bases of~$\calX$ (i.e., $AX = \lambda X$) 
and of the orthogonal complement of $\calU$ in $\calK$, respectively. 
If $m$ is the multiplicity of $\lambda$
and $s$ is the dimension of $\calK$, then $Z$ is an $(s-k)$--by--$m$ matrix.   
Thus, 
the above equality can be written as  
\eq{eq:eq0}
\begin{array}{ccc}  
\delta^2 
& = & \displaystyle \min_{\|z_1\|^2 + \ldots + \|z_m\|^2 = 1} \sum_{i=1}^m \|(V^* A V  - \lambda I) z_i\|^2  \\ 
         & = & 
\displaystyle \min_{\|z_1\|^2 + \ldots + \|z_m\|^2 = 1} \sum_{i=1}^m ( (V^* A V  - \lambda I)^2 z_i, z_i ),   
\end{array}
\en  
where $z_i$ denote the columns of $Z$.
By the Courant-Fischer theorem~\cite{Parlett:98, Saad-book3},  
$$
( (V^* A V  - \lambda I)^2 z_i, z_i ) \geq \zeta^2 \|z_i\|^2, \qquad i = 1,\ldots,m;
$$
where $\zeta^2$ is the smallest eigenvalue of $(V^* A V  - \lambda I)^2$.
Therefore,
\eq{eq:eq1}
\sum_{i=1}^m ( (V^* A V  - \lambda I)^2 z_i, z_i ) \geq \zeta^2 
\sum_{i=1}^m \|z_i\|^2.
\en 
Taking minimum of both sides of~\eqref{eq:eq1} over vectors $z_i$, such that $\sum_{i=1}^m \|z_i\|^2 = 1$, 
gives the inequality 
\[
\min_{\|z_1\|^2 + \ldots + \|z_m\|^2 = 1} \sum_{i=1}^m ( (V^* A V  - \lambda I)^2 z_i, z_i ) 
\geq \zeta^2, 
\] 
which turns into equality if all $z_i$ are set to an eigenvector associated with the
eigenvalue $\zeta^2$, normalized to have a norm of $1/\sqrt{m}$. Hence, 
\[
\min_{\|z_1\|^2 + \ldots + \|z_m\|^2 = 1} \sum_{i=1}^m ( (V^* A V  - \lambda I)^2 z_i, z_i ) 
= \zeta^2, 
\] 
and, from~\eqref{eq:eq0}, we conclude that $\delta^2 = \zeta^2$. 
%
But the eigenvalues of $V^* A V$ are~the Ritz values with respect to $\calK$ that are different from those
in $\Theta_k$. Therefore, $\zeta^2 = \min_{\mu_j \in \Theta \setminus \Theta_k}(\lambda - \mu_j)^2$,
which gives~\eqref{eq:delta0_sep_mult}, and completes the proof.  
\end{proof}

Note that a similar statement can be obtained from Theorem~\ref{thm:stewart} using 
the spectral norm in the definition of $\gamma$ and $\delta$. In this case, one arrives
at bound~\eqref{eq:stewart_mult} with $\gamma = \|P_{\calK} A (I - P_{\calK} ) \|$, where 
$\| \cdot \|$ is the spectral norm. However, the separation constant~$\delta$
will no longer be of the form~\eqref{eq:delta0_sep_mult}, and instead should be determined
according to~\eqref{eq:delta0_sep}, with $X^* A X = \lambda I$, which is somewhat less 
intuitive. For this reason, we prefer to use the Frobenius norm in Corollary~\ref{cor:stewart}.  

In order to extend Theorem~\ref{thm:hsaad} on eigenspaces, we will need the following~result,
which is an immediate corollary of Lemma~\ref{lem:sin}.
\begin{corollary}\label{cor:sin_subsp}
Let $\calX$ be an eigenspace associated with an eigenvalue $\lambda$ of a nonsingular Hermitian matrix $A$. 
Then for any subspace~$\calY$, we have
\eq{eq:sin_mult}
| \lambda/\lambda_{\max} | \sin\angle(\calX,A\calY) \leq \sin\angle(\calX,\calY) \leq 
| \lambda/\lambda_{\min} | \sin\angle(\calX,A\calY), 
\en
where $\lambda_{\min}$ and $\lambda_{\max}$ are the smallest and largest magnitude eigenvalues
of~$A$, respectively. 
\end{corollary}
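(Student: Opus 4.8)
The plan is to reduce the subspace statement to the vector statement of Lemma~\ref{lem:sin}, exploiting the fact that the minimal angle between two finite-dimensional subspaces is attained by an explicit pair of vectors. The crucial observation is that since $\calX$ is an eigenspace associated with the single eigenvalue $\lambda$, every nonzero $x \in \calX$ is an eigenvector of $A$ with the \emph{same} eigenvalue $\lambda$; consequently Lemma~\ref{lem:sin} applies to each such $x$ with the same constants $|\lambda/\lambda_{\max}|$ and $|\lambda/\lambda_{\min}|$, and no averaging over different eigenvalues is needed. Throughout I would use that the defining minimum in~\eqref{eq:subsp_angle} is attained, which follows by restricting to the (compact) product of unit spheres in $\calX$ and $\calY$ and invoking continuity of $(x,y)\mapsto\angle(x,y)$.

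I would prove the two inequalities of~\eqref{eq:sin_mult} separately, each by selecting the pair of vectors that realizes the relevant subspace angle. For the upper bound, let $x_* \in \calX$ and $z_* \in A\calY$ attain $\angle(\calX, A\calY)$, and write $z_* = A y_*$ with $y_* \in \calY$ (possible since $A\calY$ is by definition the image of $\calY$ under $A$). Applying the right-hand inequality of Lemma~\ref{lem:sin} to the eigenvector $x_*$ and the vector $y_*$ gives $\sin\angle(x_*, y_*) \le |\lambda/\lambda_{\min}|\,\sin\angle(x_*, A y_*) = |\lambda/\lambda_{\min}|\,\sin\angle(\calX, A\calY)$. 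Since $(x_*, y_*)$ is an admissible pair in the definition of $\angle(\calX,\calY)$, we have $\sin\angle(\calX,\calY) \le \sin\angle(x_*, y_*)$, and chaining the two estimates yields the right inequality of~\eqref{eq:sin_mult}.

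The lower bound is symmetric. I would take $\hat x \in \calX$ and $\hat y \in \calY$ attaining $\angle(\calX, \calY)$ and apply the left-hand inequality of Lemma~\ref{lem:sin} to them, obtaining $\sin\angle(\hat x, A \hat y) \le |\lambda_{\max}/\lambda|\,\sin\angle(\hat x, \hat y) = |\lambda_{\max}/\lambda|\,\sin\angle(\calX,\calY)$. Because $A \hat y \in A\calY$ and $\hat x \in \calX$, the pair $(\hat x, A \hat y)$ is admissible for $\angle(\calX, A\calY)$, so $\sin\angle(\calX, A\calY) \le \sin\angle(\hat x, A \hat y)$; after rearranging, this is exactly the left inequality of~\eqref{eq:sin_mult}.

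There is no deep obstacle here; the single point that requires care is correctly matching each one-sided vector bound from Lemma~\ref{lem:sin} with the subspace angle whose minimizer is being used. Using the minimizer of $\angle(\calX,A\calY)$ for the upper bound and of $\angle(\calX,\calY)$ for the lower bound is precisely what makes the two nested-minimum comparisons point in the correct direction; reversing these roles would only produce the trivial inequality $\sin\angle(\calX,\calY)\le\sin\angle(\calX,\calY)$. Finally, the nonsingularity of $A$ guarantees that $A\calY$ is a genuine subspace of the same dimension as $\calY$ and that each element of $A\calY$ equals $Ay$ for a unique $y\in\calY$, which is what licenses passing freely between $\calY$ and $A\calY$ in both parts of the argument.
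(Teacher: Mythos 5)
Your proposal is correct and matches the paper's own proof in essentially every detail: both reduce each inequality of~\eqref{eq:sin_mult} to Lemma~\ref{lem:sin} applied to the minimizing pair of the appropriate subspace angle --- the minimizer of $\angle(\calX,\calY)$ for the left inequality and the minimizer of $\angle(\calX,A\calY)$ (written via its preimage $y_*\in\calY$, using nonsingularity of $A$) for the right --- and then compares with the nested minimum on the other side. Your closing remark about why the roles of the two minimizers cannot be swapped is precisely the crux the paper's proof relies on as well.
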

\begin{proof}
Let vectors $x_* \in \calX$ and $y_* \in \calY$ deliver the minimum of $\angle(x,y)$ 
for all $x \in \calX$ and $y \in \calY$, so that, by definition~\eqref{eq:subsp_angle}, 
$\angle(x_*,y_*) = \angle(\calX,\calY)$.
%
Since~$x_*$ is an eigenvector corresponding to~$\lambda$, we can readily 
apply inequality~\eqref{eq:sin} of Lemma~\ref{lem:sin} with $x = x_*$ and $y = y_*$. 
In particular, the left-hand side of~\eqref{eq:sin}~yields the bound
\eq{eq:sin_left}
| \lambda/\lambda_{\max} | \sin\angle(x_*,A y_*) \leq \sin\angle(\calX,\calY).  
\en  
At the same time, by definition~\eqref{eq:subsp_angle}, 
\[
\angle(\calX,A\calY) = \min_{x \in \calX, x\neq0 \atop y \in \calY, y\neq0} \angle(x,A y) \leq \angle(x_*,A y_*).
\]
Therefore, after combining the above inequality with~\eqref{eq:sin_left}, we obtain
\[
| \lambda/\lambda_{\max} | \sin\angle(\calX,A\calY) \leq | \lambda/\lambda_{\max} | \sin\angle(x_*,A y_*) \leq \sin\angle(\calX,\calY), 
\]  
which proves the left part of~\eqref{eq:sin_mult}. The right part, 
\[
\sin\angle(\calX,\calY) \leq 
| \lambda/\lambda_{\min} | \sin\angle(\calX,A\calY)
\]
is proved analogously by choosing $x_*$ and $y_*$ that
give the minimum of $\angle(x,Ay)$ over $x \in \calX$ and $y \in \calY$ and applying 
the right-hand side of inequality~\eqref{eq:sin} with $x = x_*$ and $y = y_*$.  
\end{proof}

We now state the main result of this section.

\begin{theorem}\label{thm:hsaad_mult}
Let $\calX$ be an eigenspace associated with an eigenvalue $\lambda$ and
let~$\calV$ be a subspace spanned by harmonic Ritz vectors 
associated with harmonic Ritz values $\Theta_k = \left\{ \theta_1, \theta_2, \ldots, \theta_k \right\}$
with respect to the subspace $\calK$ and shift $\sigma \notin \Lambda(A)$.
Assume that~$\Theta$ is a set of all the harmonic Ritz values and 
let $P_{\calQ}$ be an orthogonal projector onto $\calQ = (A - \sigma I) \calK$. 
Then
\eq{eq:hsaad_mult}
\sin \angle (\calX, \calV) \leq \kappa(A - \sigma I) \sqrt{1 + \frac{\gamma^2}{\delta^2}} \sin \angle(\calX, \calK),
\en
where
$\gamma = \|P_{\calQ} (A - \sigma I)\inv (I - P_{\calQ} ) \|_F$, $\kappa(A - \sigma I)$ is 
the condition number defined in~\eqref{eq:kappa}, and
\eq{eq:gamma_delta_mult}
\delta = \displaystyle \min_{\theta_j \in \Theta \setminus \Theta_k} \left| \frac{\theta_j - \lambda}
{(\lambda - \sigma)(\theta_j - \sigma)} \right|.
\en
\end{theorem}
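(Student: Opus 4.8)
The plan is to reduce Theorem~\ref{thm:hsaad_mult} to the eigenspace version of Saad's bound in exactly the way Theorem~\ref{thm:hsaad} was reduced to Theorem~\ref{thm:saad}, but now using Corollary~\ref{cor:stewart} in place of Theorem~\ref{thm:saad} and the subspace form of Corollary~\ref{cor:sin_subsp} in place of Lemma~\ref{lem:sin}. First I would rewrite the harmonic Rayleigh--Ritz eigenproblem in the equivalent form~\eqref{eq:hrr2} with $S = A - \sigma I$, so that it becomes the ordinary Rayleigh--Ritz procedure for $S\inv$ over the subspace $\calQ = S\calK$. Under this identification the harmonic Ritz vectors spanning $\calV$ correspond to the Ritz vectors $Sv$ of $S\inv$, so the Ritz subspace of $S\inv$ over $\calQ$ associated with the harmonic Ritz values in $\Theta_k$ is precisely $S\calV$. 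Likewise, $\calX$ is an eigenspace of $A$ for $\lambda$ if and only if it is an eigenspace of $S\inv$ for $1/(\lambda - \sigma)$, spanned by the same vectors.

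Next I would apply Corollary~\ref{cor:stewart} to the Hermitian matrix $S\inv$, taking $\calX$ as the eigenspace and $S\calV$ as the Ritz subspace, with the Frobenius norm. This delivers
\[
\sin\angle(\calX, S\calV) \leq \sqrt{1 + \gamma^2/\delta^2}\,\sin\angle(\calX,\calQ),
\]
where $\gamma = \|P_{\calQ} S\inv (I - P_{\calQ})\|_F$ already coincides with the $\gamma$ of the statement. Since the Ritz values of $S\inv$ over $\calQ$ are exactly $1/(\theta_j - \sigma)$ for the harmonic Ritz values $\theta_j$, the separation constant produced by Corollary~\ref{cor:stewart} is $\delta = \min_{\theta_j \in \Theta \setminus \Theta_k} |1/(\lambda-\sigma) - 1/(\theta_j - \sigma)|$, and a one-line simplification turns this into~\eqref{eq:gamma_delta_mult}.

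It then remains to convert the $S$-weighted angles back to the original ones. Applying the subspace inequality~\eqref{eq:sin_mult} of Corollary~\ref{cor:sin_subsp} with respect to $S$ (so that the role of $\lambda_{\max}/\lambda_{\min}$ is played by $\max_j|\lambda_j-\sigma|/\min_j|\lambda_j-\sigma|$), the left-hand part with $\calY = \calK$ gives $\sin\angle(\calX,\calQ) \leq (\max_j|\lambda_j - \sigma|/|\lambda-\sigma|)\,\sin\angle(\calX,\calK)$, while the right-hand part with $\calY = \calV$ gives $\sin\angle(\calX,\calV) \leq (|\lambda-\sigma|/\min_j|\lambda_j - \sigma|)\,\sin\angle(\calX,S\calV)$. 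Chaining these two estimates through the displayed Stewart-type bound, the two factors of $|\lambda-\sigma|$ cancel and leave precisely $\kappa(A-\sigma I)\sqrt{1+\gamma^2/\delta^2}\,\sin\angle(\calX,\calK)$, which is~\eqref{eq:hsaad_mult}. I expect no genuine difficulty here; the only points requiring care are the bookkeeping that $S\calV$ is the correct Ritz subspace of $S\inv$ and that the Frobenius-norm $\gamma$ and the translated $\delta$ match the statement, together with verifying the clean cancellation of $|\lambda-\sigma|$ that makes the final constant exactly the condition number~\eqref{eq:kappa}.
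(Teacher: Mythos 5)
Your proposal is correct and follows essentially the same route as the paper's own proof: reduce to the Rayleigh--Ritz procedure for $S\inv$ over $\calQ = S\calK$, identify $S\calV$ as the Ritz subspace associated with the values $1/(\theta_j-\sigma)$, apply Corollary~\ref{cor:stewart} in the Frobenius norm, and then undo the $S$-weighting of the angles via the subspace inequality~\eqref{eq:sin_mult} of Corollary~\ref{cor:sin_subsp} applied to $A-\sigma I$. Your write-up is in fact slightly more explicit than the paper's (which compresses the final step into ``apply~\eqref{eq:sin_mult} to both sides''), including the verification that the $|\lambda-\sigma|$ factors cancel to yield exactly $\kappa(A-\sigma I)$.
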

\begin{proof}
As has been established in the proof of Theorem~\ref{thm:hsaad}, the harmonic~Ritz pairs
$(\theta,v)$ of $A$ with respect to $\calK$ and $\sigma$ are related to the Ritz pairs
$(\tau, u)$ of $(A - \sigma)^{-1}$ over the subspace $\calQ$, so that
$\theta = 1/\tau + \sigma$ and $u = (A-\sigma I)v$. Therefore, if 
$\calV$ is a subspace spanned by harmonic Ritz vectors associated with harmonic Ritz values
in $\Theta_k$, then $\calU = (A-\sigma I) \calV$ is a Ritz subspace associated with Ritz values
$\left\{1/(\theta_1 - \sigma), 1/(\theta_2 - \sigma), \ldots, 1/(\theta_k - \sigma)\right\}$ of 
$(A - \sigma)^{-1}$.    
Then, from Corollary~\ref{cor:stewart}, applied to matrix $(A - \sigma)^{-1}$ and 
subspace $\calQ$, we obtain 
\eq{eq:hsaad0_mult}
\sin \angle (\calX, (A-\sigma I)\calV) \leq \sqrt{1 + \frac{\gamma^2}{\delta^2}} \sin \angle(\calX, \calQ),
\en   
where $\gamma = \|P_{\calQ} (A - \sigma I)\inv (I - P_{\calQ} ) \|_F$ and $\delta$ is defined in 
\eqref{eq:gamma_delta_mult}. Applying inequalities~\eqref{eq:sin_mult} of Corollary~\ref{cor:sin_subsp} 
(with $A$ replaced by $A-\sigma I$) to both sides~of~\eqref{eq:hsaad0_mult} leads to the desired
bound~\eqref{eq:hsaad_mult}, where $\kappa(A - \sigma I)$ is defined in~\eqref{eq:kappa}.
\end{proof}

Theorem~\ref{thm:hsaad_mult} shows that if $\lambda$ is a multiple eigenvalue, then there
exists a~subspace $\calV$ spanned by harmonic Ritz vectors that approximates the entire
eigenspace $\calX$ associated with $\lambda$. Moreover, the approximation is improved as 
the angle between $\calK$ and $\calX$ decreases.

\section{A bound for $T$-harmonic Ritz vectors}\label{sec:tharm}

As demonstrated in~\cite{Ve.Kn:14TR}, the robustness of an 
interior eigensolver can be notably improved by incorporating 
a properly chosen HPD preconditioner $T$ into the harmonic Rayleigh--Ritz.
This was done  
by replacing the Petrov--Galerkin condition~\eqref{eq:galerkin} by~\eqref{eq:galerkinT}, 
which lead to the $T$-harmonic Rayleigh--Ritz~procedure.  

The next theorem shows that our main result can be easily~extended to the $T$-harmonic case under the idealized assumption that 
$A$ and $T$ commute. 


\begin{theorem}\label{thm:hsaadT}
Let $(\lambda, x)$ be an eigenpair of a Hermitian matrix $A$ and
$(\theta, v)$ be a $T$-harmonic Ritz pair with respect to the subspace $\calK$ and
shift $\sigma \notin \Lambda(A)$.~Let~$T$ be an HPD preconditioner, such that $TA = AT$.
Assume that~$\Theta$ is a set of all the $T$-harmonic Ritz values and 
let $P_{\calQ}$ be an orthogonal projector onto $\calQ = T^{1/2}(A - \sigma I) \calK$. 
Then
\eq{eq:hsaadT}
\sin \angle (x, v) \leq \kappa(T^{1/2} (A - \sigma I)) \sqrt{1 + \frac{\gamma^2}{\delta^2}} \sin \angle(x, \cal K),
\en
with $\gamma = \|P_{\calQ} (A - \sigma I)\inv (I - P_{\calQ} ) \|$,
$\delta$ defined in~\eqref{eq:gamma_delta}, and
$\kappa(T^{1/2} (A - \sigma I)) = |\nu_{\max}/\nu_{\min}|$, where 
$\nu_{\min}$ and $\nu_{\max}$ are the smallest and largest magnitude eigenvalues of 
$T^{1/2} (A - \sigma I)$, respectively.
\end{theorem}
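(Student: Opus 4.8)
The plan is to reduce the $T$-harmonic problem to an ordinary Rayleigh--Ritz problem for a suitable Hermitian matrix and then reuse the machinery already assembled for Theorem~\ref{thm:hsaad}. The device that makes this work is the matrix $M = T^{1/2}(A - \sigma I)$. Because $A$ and $T$ commute and are both Hermitian, they share an orthonormal eigenbasis; hence $T^{1/2}$ commutes with $A$ and with $S := A - \sigma I$, so that $M^* = S^* (T^{1/2})^* = S T^{1/2} = T^{1/2} S = M$. Thus $M$ is Hermitian, it has the same eigenvectors as $A$, and its eigenvalues are $\nu_j = t_j^{1/2}(\lambda_j - \sigma)$, where $t_j > 0$ is the eigenvalue of $T$ attached to the eigenvector of $\lambda_j$. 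In particular $x$ is an eigenvector of $M$, and $\kappa(M) = |\nu_{\max}/\nu_{\min}|$ is exactly the condition number appearing in~\eqref{eq:hsaadT}.

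First I would rewrite the $T$-harmonic eigenproblem~\eqref{eq:hrrT}, namely $K^* S T S K c = \xi K^* S T K c$, in terms of $M$. Using the commutativity of $T^{1/2}$ with $S$ and $S\inv$, the stiffness and mass matrices factor as $K^* S T S K = (MK)^*(MK)$ and $K^* S T K = (MK)^* S\inv (MK)$. Consequently, writing $\tau = 1/\xi$, the problem is equivalent to $(MK)^* S\inv (MK) c = \tau (MK)^*(MK) c$, which is precisely the ordinary Rayleigh--Ritz problem for the matrix $S\inv = (A - \sigma I)\inv$ over the subspace $\calQ = M\calK = T^{1/2}(A - \sigma I)\calK$. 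Its Ritz pair is $(\tau, Mv)$, where $v = Kc$ is the $T$-harmonic Ritz vector, and, since $\theta = \xi + \sigma$, one has $\tau = 1/(\theta - \sigma)$, exactly as in the proof of Theorem~\ref{thm:hsaad}.

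Next I would apply Saad's bound, Theorem~\ref{thm:saad}, to the matrix $S\inv$, its eigenpair $(1/(\lambda - \sigma),\, x)$, and the Ritz pair $(\tau, Mv)$ over $\calQ$. This yields $\sin\angle(x, Mv) \le \sqrt{1 + \gamma^2/\delta^2}\,\sin\angle(x, \calQ)$ with $\gamma = \|P_{\calQ} (A - \sigma I)\inv (I - P_{\calQ})\|$. Because the Ritz values of $S\inv$ are $\tau_j = 1/(\theta_j - \sigma)$ while the targeted eigenvalue is $1/(\lambda - \sigma)$, the separation constant reduces, after combining the fractions, to the quantity $\delta$ given in~\eqref{eq:gamma_delta}.

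The final step is to translate the two remaining angles through the Hermitian matrix $M$, for which $x$ is an eigenvector with eigenvalue $\nu$. Applying Corollary~\ref{cor:sin_subsp} to $M$ gives $\sin\angle(x, \calQ) = \sin\angle(x, M\calK) \le |\nu_{\max}/\nu|\,\sin\angle(x, \calK)$, while the right-hand inequality of Lemma~\ref{lem:sin}, also applied to $M$, gives $\sin\angle(x, v) \le |\nu/\nu_{\min}|\,\sin\angle(x, Mv)$. Chaining these two estimates with the Saad bound, the factor $|\nu|$ cancels and leaves exactly $\kappa(M) = |\nu_{\max}/\nu_{\min}|$ in front, producing~\eqref{eq:hsaadT}. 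I expect the only delicate point to be the verification of the two factorizations $K^* S T S K = (MK)^*(MK)$ and $K^* S T K = (MK)^* S\inv (MK)$; both hinge entirely on the hypothesis $TA = AT$, which is precisely what lets $T^{1/2}$ pass through $S$ and $S\inv$ and which makes $M$ Hermitian. Everything else is a direct transcription of the argument for Theorem~\ref{thm:hsaad} with $S$ replaced by $M$.
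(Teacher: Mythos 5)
Your proof is correct and follows essentially the same route as the paper: rewriting the projected problem \eqref{eq:hrrT} as a Rayleigh--Ritz procedure for $S\inv$ over $\calQ = T^{1/2}S\calK$ (your $M\calK$), invoking Theorem~\ref{thm:saad}, and then translating both angles via Lemma~\ref{lem:sin} and Corollary~\ref{cor:sin_subsp} applied to the Hermitian matrix $T^{1/2}S$. Your explicit verification of the two factorizations and of the Hermiticity of $M$ is a welcome bit of detail that the paper leaves implicit, but the argument is the same.
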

\begin{proof}
If $A$ and $T$ commute, then~\eqref{eq:hrrT} 
can be written as 
\eq{eq:hrr2T2}
(T^{1/2} SK)^* S\inv (T^{1/2} SK) c = \tau (T^{1/2} SK)^*(T^{1/2} SK)c, \quad S = A - \sigma I.
\en   
This corresponds to the Rayleigh--Ritz procedure for $S\inv$ with respect to the subspace
$\calQ = T^{1/2} S \calK$, where $(\tau, T^{1/2} Sv)$ is a Ritz pair,
and $v = K c$ is the $T$-harmonic Ritz vector.
Thus, Theorem~\ref{thm:saad} applies.
It suggests that for an eigenpair 
$(1/(\lambda-\sigma), x)$ of $S^{-1}$ and a Ritz pair $(\tau, T^{1/2} Sv)$, we have  
\eq{eq:hsaadT0}
\sin \angle (x, T^{1/2}Sv) \leq \sqrt{1 + \frac{\gamma^2}{\delta^2}} \sin \angle(x, \calQ),
\en   
with $\gamma = \|P_{\calQ} S\inv (I - P_{\calQ} ) \|$ and $\delta$ defined in~\eqref{eq:gamma_delta},
where $\Theta$ is the set of all~$T$-harmonic Ritz values with respect to $\calK$,
as $\tau$ is related to $\theta$ by $\tau = 1/(\theta - \sigma)$. 

Since $T$ and $A$ commute, the matrix $T^{1/2}S$ is Hermitian and nonsingular, because 
$\sigma \notin \Lambda(A)$. Moreover, $T^{1/2}S$ has the same eigenvectors $x$ as $A$.
Therefore, Lemma~\ref{lem:sin} can be applied with respect to $T^{1/2}S$, which gives
\eq{eq:lemmTS}
\sin\angle(x,v) \leq \left| \frac{\nu}{\nu_{\min}} \right| \sin\angle(x, T^{1/2}Sv),
\en 
where $\nu$ is an eigenvalue of $T^{1/2}S$ associated with the eigenvector $x$ and
$\nu_{\min}$ is the smallest magnitude eigenvalue of $T^{1/2}S$.
Similarly, by Corollary~\ref{cor:sin_subsp},
\eq{eq:corTS}
\sin\angle(x, \calQ) \leq \left| \frac{\nu_{\max}}{\nu} \right| \sin\angle(x, \calK).
\en 
Combining~\eqref{eq:lemmTS} and~\eqref{eq:corTS} with~\eqref{eq:hsaadT0}
gives the desired bound~\eqref{eq:hrr2T2}. 
\end{proof}

Clearly, the assumption that $TA = AT$ is impractical in general. Nevertheless,
the result of Theorem~\ref{thm:hsaadT} is useful in that it provides  
qualitative guidelines on the practical choice of the preconditioner $T$.
For example, it suggests an insight into ideal choices of $T$,
discussed below.  

A possible option for choosing a commuting HPD preconditioner is $T = |A-\sigma I|\inv$. 
In this case, 
$\kappa(T^{1/2}(A - \sigma I))$ in~\eqref{eq:hsaadT} turns 
into $\kappa(A - \sigma I)^{1/2}$, i.e., the condition number 
in~bound~\eqref{eq:hsaad} for the conventional harmonic Rayleigh--Ritz is
replaced by its square root.

The construction of the exact inverted absolute value is generally infeasible for 
large problems. However, in practice, one can choose $T$ as an approximation
to $|A-\sigma I|\inv$. This strategy is called the absolute value 
preconditioning~\cite{thesis, Ve.Kn:13}. Absolute value preconditioners have 
been successfully constructed and applied for computing 
interior eigenvalues of certain classes of matrices in~\cite{Ve.Kn:14TR}.

Another alternative is to set $T = (A - \sigma I)^{-2}$. In this case,
$\kappa(T^{1/2}(A - \sigma I))$ in~\eqref{eq:hsaadT} is annihilated.
Thus, in practice, a possible approach is to build~$T$ as an approximation
of $(A - \sigma I)^{-2}$. This, e.g., relates the construction of $T$ to preconditioning 
normal equations; 
see~\cite{Benzi.Tuma:03, Saad.Sosonkina:01} for a few options.  
%
Generally, however, it is hard to say which of the two preconditioning options is more 
efficient in practice; the outcomes are likely to be problem dependent. 

\section*{Acknowledgements}
The author thanks the anonymous referee, whose comments and suggestions have helped to significantly
improve this manuscript.

\bibliography{eig,plmr}

\end{document}